\documentclass{amsart}
\usepackage{amsmath}
\usepackage{amssymb}
\usepackage{amsfonts} 

\usepackage[hypertex,colorlinks=true,linkcolor=blue,citecolor=blue,]{hyperref}

\ProcessOptions\relax
\PassOptionsToClass{twoside}{article}

\RequirePackage{amsfonts,amssymb,amsmath,amscd,amsthm}
\RequirePackage{txfonts}
\RequirePackage{graphicx}
\RequirePackage{xcolor}
\RequirePackage{enumerate}

\newtheorem{theorem}{Theorem}[section]
\newtheorem{lemma}[theorem]{Lemma}
\newtheorem{corollary}[theorem]{Corollary}

\theoremstyle{definition}
\newtheorem{definition}[theorem]{Definition}

\newtheorem{example}[theorem]{Example}

\theoremstyle{remark}
\newtheorem{remark}[theorem]{Remark}

\numberwithin{equation}{section}



\begin{document}

\newcommand{\om}{\omega}
\newcommand{\si}{\sigma}
\newcommand{\la}{\lambda}
\newcommand{\ph}{\varphi}
\newcommand{\ep}{\varepsilon}
\newcommand{\tep}{\widetilde{\varepsilon}}
\newcommand{\al}{\alpha}
\newcommand{\sub}{\subseteq}
\newcommand{\hra}{\hookrightarrow}
\newcommand{\de}{\delta}

\
\newcommand{\RR}{{\mathbb{R}}}
\newcommand{\NN}{{\mathbb{N}}}
\newcommand{\DD}{{\mathbb{D}}}
\newcommand{\CC}{{\mathbb{C}}}
\newcommand{\ZZ}{{\mathbb{Z}}}
\newcommand{\T}{{\mathbb{T}}}
\newcommand{\TT}{{\mathbb{T}}}
\newcommand{\KK}{{\mathbb{K}}}
\newcommand{\px}{\partial X}
\newcommand{\cL}{{\mathcal{L}}}
\newcommand{\cA}{{\mathcal{A}}}
\newcommand{\cB}{{\mathcal{B}}}

\def\C{\mathbb C}
\def\R{\mathbb R}
\def\b{\mathcal B}
\def\c{\mathcal C}
\def\X{\mathbb X}
\def\U{\mathcal U}\def\M{\mathcal M}
\def\a{\mathcal A}
\def\K{\mathbb K}
\def\F{\mathbb F}
\def\b{\mathfrak{B}}
\def\f{\mathfrak{F}}
\def\x{\mathfrak{X}}
\def\Z{\mathbb Z}
\def\P{\mathbb P}
\def\v{\vartheta_\alpha}
\def\va{{\varpi}_\alpha}
\def\I{\mathbb I}
\def\H{\mathbb H}
\def\Y{\mathbb Y}
\def\E{\mathbb E}
\def\N{\mathbb N}
\def\cal{\mathcal}

\title[Existence of Pseudo-Almost Automorphic Solutions]{Existence Results for Some Damped Second-Order Volterra Integro-Differential Equations}

\author{Toka Diagana}
\address{Department of Mathematics, Howard University, 2441 6th
Street N.W.,  Washington, D.C. 20059, USA}

\email{tdiagana@howard.edu}

\subjclass[2000]{12H20; 45J05; 43A60; 35L71; 35L10; 37L05.}

\keywords{second-order integro-differential equation; pseudo-almost automorphic; Schauder fixed point theorem; hyperbolic semigroup; structurally
damped plate-like boundary value problem}

\maketitle

\begin{center}
{\it In Memory of Prof. Yahya Ould Hamidoune}
\end{center}

\begin{abstract} In this paper we make a subtle use of operator theory techniques and the well-known Schauder fixed-point principle to establish the existence of pseudo-almost automorphic solutions to some second-order damped integro-differential equations with pseudo-almost automorphic coefficients.  In order to illustrate our main results, we will study the existence of pseudo-almost automorphic solutions to a structurally
damped plate-like boundary value problem.
\end{abstract}

\maketitle

\section{Introduction} Integro-differential equations play an important role when it comes to modeling various natural phenomena, see, e.g.,
 \cite{CC, C, da2, F1, F2, HR, G, J, M, NO, N, pruss, V1, V2, V3, webb}.
In recent years, noteworthy progress has been made in studying the existence of periodic, almost periodic, almost automorphic, pseudo-almost periodic, and pseudo-almost automorphic solutions to first-order integro-differential equations, see, e.g., \cite{AG, TDbook, DE, DHH, DE1, DE2, H0, H1, H2, li1, li2, V1, V2, V3}. 
The most popular method used to deal with the existence of solutions to those 
first-order integro-differential equations consists of the so-called method of resolvents, see, e.g., \cite{adez, da1, da2, H1, H2, li1, li2}. 

Fix $\alpha \in (0, 1)$. Let $\H$ be an infinite dimensional separable Hilbert space over the field of complex
numbers equipped with the inner product and norm given respectively by $\langle \cdot, \cdot \rangle$ and $\|\cdot\|$.
The purpose of this paper consists of making use of a new approach to study the existence of pseudo-almost automorphic solutions to the class of damped second-order Volterra integro-differential equations given by
\begin{eqnarray}\label{PR}
\frac{d^2 \ph}{dt^2} + B \frac{d\ph}{dt}+ A \ph  = \int_{-\infty}^t C(t-s) \ph (s) ds + f(t,\ph),
\end{eqnarray}
where $A: D(A) \subset \H \mapsto \H$ is 
an unbounded self-adjoint linear operator whose spectrum consists of isolated eigenvalues given by
$$0 < \lambda_1 < \lambda_2 < ...< \lambda_n \to \infty$$ as $n \to \infty$ with each eigenvalue having a finite multiplicity $\gamma_j$ equals to the multiplicity of the corresponding eigenspace, $B: D(B) \subset \H \mapsto \H$ is a positive self-adjoint linear operator such that there exist two constants $\gamma_1, \gamma_2 > 0$ and such that $\gamma_1 A^\alpha \leq B \leq \gamma_2 A^\alpha$, that is,
$$\gamma_1 \langle A^\alpha \ph, \ph \rangle \leq \langle B\ph, \ph \rangle \leq \gamma_2 \langle A^\alpha \ph, \ph \rangle$$
for all $\ph \in D(B^{\frac{1}{2}}) = D(A^{\frac{\alpha}{2}})$,
the mappings $C(t): D(A) \subset \H \mapsto \H$ consist of (possibly unbounded) linear operators for each $t \in \R$, and 
the function $f: \R \times \H \mapsto \H$ is pseudo-almost automorphic in the first variable uniformly in the second one.

Equations of type Eq. (\ref{PR}) 
arise very often in the study of natural phenomena in which a certain memory effect is taken into consideration, see, e.g., \cite{AL, BE, LO, MU, MU1}.
In \cite{AL, BE} for instance, equations of type Eq. (\ref{PR}) appeared in the study of a viscoelastic wave equation with memory. 

The existence, uniqueness, and asymptotic behavior of solutions to Eq. (\ref{PR}) have widely been studied, see, e.g., \cite{AL, BE, BU, CA, LIA1, LIA2, LO, MU, MU1, O1, O2, O3}. However, to the best of our knowledge, the existence of pseudo-almost automorphic solutions to Eq. (\ref{PR}) is an untreated original problem with important applications, which constitutes the main motivation of this paper.

In this paper, we are interested in the special case $B = 2\gamma A^{\alpha}$ where $\gamma > 0$ is a constant, that is,
\begin{eqnarray}\label{PRRR}
\frac{d^2 \ph}{dt^2} + 2\gamma A^\alpha \frac{d\ph}{dt}+ A \ph  = \int_{-\infty}^t C(t-s) \ph (s) ds + f(t,\ph), \ \ t \in \R.
\end{eqnarray}
It should be mentioned that various versions of Eq. (\ref{PRRR}) have been investigated in the literature, see, e.g., Chen and Triggiani \cite{CT2, CT}, Huang \cite{HH1, HH2, HH3, HH4}, and Xiao and Liang \cite{XL, XL1, XL2, XL3, XL4, XL5}.

Consider the polynomial $Q_n^\gamma$ associated with the left hand side of Eq. (\ref{PRRR}), that is,
\begin{eqnarray}\label{D11}
Q_n^\gamma (\rho) := \rho^2 + 2\gamma \lambda_{n}^{\alpha} \rho + \lambda_n\end{eqnarray} and denote its roots by $\rho_{1}^n:= d_n + i e_n$ and $\rho_{2}^n := r_n + i s_n$ for all $n \geq 1$.

In the rest of the paper, we suppose that the roots $\rho_{1}^n$ and $\rho_{2}^n$ satisfy: $\rho_{1}^n \not= \rho_{2}^n$ for all $n \geq 1$ and that the following crucial assumption holds:
there exists $\delta_0 > 0$ such that
\begin{eqnarray}\label{AS1}
\sup_{n\geq 1} \Big[ \max(d_n, r_n)\Big] \leq -\delta_0 < 0.
\end{eqnarray}

In order to investigate the existence of pseudo-almost automorphic solutions to Eq. (\ref{PRRR}), our strategy consists of rewriting it as a first-order integro-differential equation in the product space $\E_{\frac{1}{2}}:=D(A^{\frac{1}{2}}) \times \H$ and then study the existence of pseudo-almost automorphic solutions to the obtained first-order integro-differential equation with the help of Schauder fixed point principle and then go back to Eq. (\ref{PRRR}). 

Recall that the inner product of $\E_{\frac{1}{2}}$ is defined as follows:
$$\Bigg (\left(\begin{smallmatrix}\displaystyle \ph_1\\ \\ \\ \displaystyle \ph_2\end{smallmatrix}\right), \left(\begin{smallmatrix}\displaystyle \psi_1 \\ \\ \\  \displaystyle \psi_2\end{smallmatrix}\right)     \Bigg)_{\E_{\frac{1}{2}}} := \langle A^{\frac{1}{2}} \ph_1, A^{\frac{1}{2}} \psi_1 \rangle + \langle \ph_2, \psi_2 \rangle$$
for all $\ph_1, \psi_1 \in D(A^{\frac{1}{2}})$ and $\ph_2, \psi_2 \in \H$. Its corresponding norm will be denoted $\|\cdot\|_{\E_{\frac{1}{2}}}$.

Letting
$$\displaystyle \Phi:=\left(\begin{smallmatrix}\displaystyle \ph \\ \\ \\ \\ \\ \\ \displaystyle \ph'\end{smallmatrix}\right) \in \E_{\frac{1}{2}},$$
then Eq. \eqref{PRRR}
can be rewritten in the following form
\begin{equation}\label{PRR1}
\frac{d\Phi}{dt} = \a  \Phi + \int_{-\infty}^t \c(t-s) \Phi (s) ds + F(t, \Phi(t)), \;t\in \R,
 \end{equation}
where $\a, \c$ are the operator matrices defined by
\begin{equation*}\label{Therm}
\a  = \left (
\begin{matrix}
0 & I \\ \\ \\
-A & -A^\alpha\\
\end{matrix}
\right), \ \ \ \ 
\c  = \left (
\begin{matrix}
C \\ \\ \\
0 \\
\end{matrix}
\right),
\end{equation*}
with domain $D(\a) = D(A) \times [D(A^{\frac{1}{2}}) \cap D(A^\alpha)] = D(\c)$ ($D(\a) = D(A) \times D(A^{\frac{1}{2}})$ if $0 <\alpha \leq \frac{1}{2}$ and $D(\a) = D(A) \times D(A^\alpha)$ if $\frac{1}{2} \leq \alpha <1$), and  
the function $F: \R \times \E_{\frac{1}{2}} \mapsto \E:= \H \times \H$ is given by 
$$F(t, \Phi)=\left(\begin{smallmatrix}\displaystyle 0\\ \\ \\ \\ \\ \displaystyle f(t,\ph)\end{smallmatrix}\right).$$

In order to investigate Eq. (\ref{PRR1}), we study the first-order differential equation in the space $\E_{\frac{1}{2}}$ given by,
\begin{eqnarray}\label{OPR1}
\frac{d\ph}{dt} = \left(\a + \b\right) \ph   + F(t,\ph), \ \ t\in \R,
\end{eqnarray}
where $\b: C(\R, D(\a)) \mapsto \E_{\frac{1}{2}}$ is the linear operator defined by 
\begin{eqnarray}\label{PR2}\b \ph  := \int_{-\infty}^t \c(t-s) \ph (s) ds, \ \ \ph \in C(\R, D(\a))\end{eqnarray}with $C(\R, D(\a))$ being the collection of
all continuous functions from $\R$ into $D(\a)$.

In order to study the existence of solutions to Eq. (\ref{OPR1}), we will make extensive use of hyperbolic semigroup tools and fractional powers of operators, and that the linear operator $\b$ satisfies some additional assumptions. Our existence result will then be obtained through the use of the well-known Schauder fixed-point theorem. Obviously, once we establish the sought existence results for Eq. (\ref{OPR1}), then we can easily go back to Eq. (\ref{PRRR}) notably through Eq. (\ref{PR2}).

The concept of pseudo almost automorphy is a powerful notion introduced in the literature by Liang {\it et al.} 
\cite{L, LLL, LL, XJ}. This concept has recently generated several developments and extensions, which have been summarized in a new book by Diagana \cite{TDbook}. 
The existence of almost periodic and asymptotically
almost periodic solutions to integro-differential equations of the form 
Eq. (\ref{PRR1}) in a general context has recently been established in \cite{H1, H2}. Similarly, in \cite{li2}, the existence of pseudo-almost automorphic solutions to Eq. (\ref{PRR1}) was studied. The main method used in the above-mentioned papers are resolvents operators.
However, to the best of our knowledge, the existence of
pseudo-almost automorphic solutions to Eq. (\ref{PRRR}) is an important untreated topic with some interesting
applications. Among other things, we will make
extensive use of the Schauder fixed point to
derive some sufficient conditions for the existence of
pseudo-almost automorphic (mild) solutions to \eqref{OPR1} and then to Eq. (\ref{PRRR}).

\section{Preliminaries} Some of the basic results discussed in this section are mainly taken from the following recent papers by Diagana \cite{MCM, PAMS}.
In this paper, $\H$ denote an infinite dimensional separable Hilbert space over the field of complex
numbers equipped with the inner product and norm given respectively by $\langle \cdot, \cdot \rangle$ and $\|\cdot\|$. 
If $A$ is a linear operator upon a Banach space $(\X, \|\cdot\|)$, then the notations $D(A)$, $\rho(A)$,
$\sigma(A)$, $N(A)$, and $R(A)$ stand respectively for the domain, resolvent, spectrum, kernel, and the range of $A$.
Similarly, if $A: D:=D(A) \subset \X \mapsto \X$ is a closed linear operator on a Banach space, one denotes its graph norm by $\|\cdot\|_{D}$ defined by
$\|x\|_D := \|x\| + \|Ax\|$ for all $x \in D$. 
From the closedness of $A$, one can easily see that $(D, \|\cdot\|_{D})$ is a Banach space.
Moreover, one sets $R(\lambda, L) := (\lambda I - L)^{-1}$ for all
$\lambda \in \rho(A)$. 
We set $Q =I-P$ for a projection
$P$. If $\Y, \Z$ are Banach spaces, then the space $B(\Y, \Z)$ denotes the collection of all bounded
linear operators from $\Y$ into $\Z$ equipped with its natural
uniform operator topology $\|\cdot\|_{B(\Y, \Z)}$. We also set $B(\Y) = B(\Y, \Y)$.
If $K \subset \X$ is a subset, we let $\overline{co}\, K$ denote the closed convex hull of $K$. Additionally, 
$\T$ will denote the set defined by, $\T :=\{(t,s) \in \R \times \R: t \geq s\}.$
If $(\X, \|\cdot\|_\X)$ and $(\Y, \|\cdot\|_\Y)$ are Banach spaces, their product $\X \times \Y :=\{(x,y): x \in \X, \ y \in \Y\}$ is also a Banach when it is equipped with the norm
given by $$\|(x,y)\|_{\X \times \Y} = \sqrt{\|x\|_{\X}^2 + \|y\|_{\Y}^2} \ \ \mbox{for all} \ \ (x,y) \in \X \times \Y.$$

In this paper if $\beta \geq 0$, then we set $\E_\beta := D(A^\beta) \times \H$, and $\E := \H \times \H$ and equip them with their corresponding topologies $\|\cdot\|_{\E_\beta}$ and $\|\cdot\|_\E$. Recall that $D(A^\beta)$ will be equipped
with the norm defined by, $\|\ph\|_\beta := \|A^\beta \ph\|$ for all $\ph \in D(A^\beta)$.

In the sequel, $A: D(A) \subset \H \mapsto \H$ stands for a self-adjoint (possibly unbounded) linear operator on the Hilbert space $\H$ whose spectrum consists of isolated eigenvalues $0 < \lambda_1 < \lambda_2 < ...< \lambda_n \to \infty$ with each eigenvalue having a finite multiplicity $\gamma_j$ equals to the multiplicity of the corresponding eigenspace.
Let $\{e_{j}^k\}$ be a (complete) orthonormal sequence of eigenvectors associated with the eigenvalues $\{\lambda_j\}_{j\geq 1}$.
Clearly, for each $u \in D(A)$, where if $$\displaystyle u \in D(A) :=\Big\{u \in \H: \quad \sum_{j=1}^\infty \lambda_j^2 \| E_j u\|^2 < \infty\Big\}, \ \ \mbox{then} \ \
Au = \sum_{j=1}^\infty \lambda_j \sum_{k=1}^{\gamma_j} \langle u, e_{j}^k \rangle e_{j}^k = \sum_{j=1}^\infty \lambda_j E_j u$$
with $E_j u =\sum_{k=1}^{\gamma_j} \langle u, e_{j}^k \rangle e_{j}^k.$
Note that $\{E_j\}_{j\geq1}$ is a sequence of orthogonal projections on $\H$. Moreover, each $u \in \H$ can written as follows:
$u = \sum_{j=1}^\infty E_j u.$
It should also be mentioned that the operator $-A$ is the infinitesimal generator of an analytic semigroup $\{S(t)\}_{t \geq 0}$, which is explicitly expressed in terms of those orthogonal projections $E_j$ by, for all $u \in \H$,
$$S(t) u = \sum_{j=1}^\infty e^{-\lambda_j t} E_j u$$which in particular is exponentially stable as $$\|S(t)\| \leq e^{-\lambda_1 t}$$ for all $t \geq 0$.

\section{Sectorial Linear Operators}
The basic results discussed in this section are mainly taken from Diagana \cite{TDbook, d6}.

\begin{definition}{\rm \label{sect}
A linear operator $B: D(B) \subset \X \mapsto \X$ (not necessarily
densely defined) on a Banach space $\X$ is said to be sectorial if the following hold:
there exist constants $\omega\in \R$, $\displaystyle \theta\in
\left(\frac{\pi}{2},\pi\right)$, and $M>0$ such that $\rho(B)\supset
S_{\theta,\omega}$,
\begin{align}\label{sect}
&S_{\theta,\omega}:=\Big\{\lambda\in \C:\lambda\neq\omega,\ \ |\arg(\lambda-\omega)|<\theta\Big\}, \ \ \mbox{and}\\
&\|R(\lambda, B)\|\leq \frac{M}{|\lambda-\omega|},
\quad \lambda\in S_{\theta,\omega}.
\end{align}}
\end{definition}

\begin{example}{\rm
Let $p \geq 1$ and let $\Omega \subset \R^d$ be open bounded
subset with $C^2$ boundary $\partial \Omega$. Let $\X:=
L^p(\Omega)$ be the Lebesgue space equipped with the norm,
$\|\cdot\|_p$ defined by,
$$\|\varphi\|_p = \Big(\int_{\Omega} |\varphi(x)|^p dx\Big)^{1/p}.$$

Define the operator $A$ as follows:
$$D(B)= W^{2, p}(\Omega) \cap W^{1,p}_{0}(\Omega), \ \ B(\varphi)= \Delta \varphi, \ \ \forall \varphi \in D(B),$$
where $\displaystyle \Delta = \sum_{k=1}^d
\frac{\partial^2}{\partial x_k^2}$ is the Laplace operator.
It can be checked that the operator $B$ is sectorial on
$L^p(\Omega)$. }
\end{example}

It is well-known \cite{Lun} that if $B: D(B) \subset \X \mapsto \X$ is a sectorial linear operator, then
it generates an analytic semigroup $(T(t))_{t\geq0}$, which maps
$(0,\infty)$ into $B(\X)$ and such that there exist $M_0, M_1 > 0$
with
\begin{align}
&\|T(t)\|\leq M_0 e^{\omega t}, \quad t> 0,\\
&\|t(A-\omega)T(t)\|\leq M_1 e^{\omega t}, \quad t>
0.\label{analy}
\end{align}

In this paper, we suppose that the semigroup
$(T(t))_{t\geq0}$ is hyperbolic, that is, there exist a projection
$P$ and constants $M, \delta>0$ such that $T(t)$ commutes with
$P$, $N(P)$ is invariant with respect to $T(t)$, $T(t):
R(Q)\mapsto R(Q)$ is invertible, and the following hold
\begin{equation}\label{hyP}
\|T(t)Px\|\leq Me^{-\delta t}\|x\| \qquad \mbox{ for } t\geq0,
\end{equation}
 \begin{equation}\label{hyQ}
 \|T(t)Qx\|\leq
Me^{\delta t}\|x\| \qquad \mbox{ for } t\leq0, \end{equation}
 where $Q:=I-P$ and,  for $ t\leq0$,  $T(t):= (T(-t))^{-1} $.

Recall that the analytic semigroup $(T(t))_{t\geq0}$ associated
with $B$ is hyperbolic if and only if $\sigma(B)\cap
i\R=\emptyset,$ see details in \cite[Proposition 1.15, pp.305]{EN}.

\begin{definition}
Let $\alpha\in(0,1)$. A Banach space $(\X_\alpha,
\|\cdot\|_\alpha)$ is said to be an intermediate
 space between $D(B)$ and $\X$, or a space of class ${\mathcal J}_\alpha$,
 if $D(B)\subset \X_\alpha\subset \X$ and there is a constant  $c>0$
 such that
 \begin{equation}\label{extrap}
 \|x\|_{\alpha}\leq
c\|x\|^{1-\alpha}\|x\|_{B}^{\alpha},\qquad x\in D(B).
\end{equation}
\end{definition}

Concrete examples of $\X_\alpha$ include $D((-B^\alpha))$ for
$\alpha\in(0,1)$, the domains
 of the fractional powers of $B$,  the real interpolation
 spaces $D_B(\alpha,\infty)$,  $\alpha\in(0,1)$,  defined as
 the space of all $x \in \X$ such that,
 $$[x]_\alpha=\sup_{0< t\leq1}\|t^{1-\alpha}BT(t)x\|<\infty$$
with the norm
$$\|x\|_{\alpha}=\|x\|+[x]_{\alpha},$$ the abstract H\"older spaces
$D_B(\alpha):=\overline{D(B)}^{\|.\|_\alpha}$ as well as the
complex interpolation spaces $[\X,D(B)]_\alpha$.

For a hyperbolic analytic  semigroup $(T(t))_{t\geq0}$, one can easily check that
  similar estimations as both Eq. \eqref{hyP} and Eq. \eqref{hyQ} still hold with the $\alpha$-norms
  $\|\cdot\|_\alpha$. In fact,  as the part of $A$ in $R(Q)$ is
  bounded, it follows from Eq. \eqref{hyQ} that
  \begin{equation*}
  \|BT(t)Qx\|\leq
C'e^{\delta t}\|x\| \ \ \mbox{for} \ \ t\leq0. \end{equation*}

 Hence, from Eq. \eqref{extrap} there exists a constant $c(\alpha)>0$  such that
\begin{equation}\label{hyQ*}
\|T(t)Qx\|_{\alpha}\leq c(\alpha)e^{\delta t}\|x\|\ \ \mbox{ for }
t\leq0.
\end{equation}

In addition to the above, the following holds
  \begin{equation*}
\|T(t)Px\|_{\alpha}\leq \|T(1)\|_{B(\X,\X_\alpha)}\|T(t-1)Px\|,\ \
t\geq1,
\end{equation*}
 and hence from Eq. \eqref{hyP}, one obtains
\begin{equation*}
\|T(t)Px\|_{\alpha}\leq M'e^{-\delta t}\|x\|,  \qquad
t\geq1,\end{equation*} where $M'$ depends on $\alpha$. For
$t\in(0,1]$, by Eq. \eqref{analy} and Eq. \eqref{extrap},
$$\|T(t)Px\|_{\alpha}\leq M'' t^{-\alpha}\|x\|.$$
Hence, there exist constants $M(\alpha)>0$ and $\gamma>0$ such
that
  \begin{equation}\label{hyP*}
\|T(t)Px\|_{\alpha}\leq M(\alpha)t^{-\alpha}e^{-\gamma t}\|x\|
\qquad \mbox{ for } t>0.
\end{equation}

\begin{remark} 
Note that if the analytic semigroup $T(t)$ is exponential stable, that is, there exists constants $N, \delta > 0$ such that $\|T(t)\| \leq N e^{-\delta t}$ for $t \geq 0$, then the projection $P = I$ ($Q = I - P(t) = 0$). In that case, Eq. (\ref{hyP*}) still holds and can be rewritten as
follows: for all $x \in \X$,
\begin{equation}\label{eq1.100}
  \left\|T(t)x\right\|_{\alpha}\leq
 M(\alpha)e^{- \frac{\gamma}{2}t} t^{-\alpha} \left\|x\right\|.
  \end{equation}
\end{remark}

For more on interpolation spaces and related issues, we refer the reader to the following excellent books
Amann \cite{Am} and Lunardi \cite{Lun}.

\subsection{Pseudo-Almost Automorphic Functions}
Let $BC(\R, \X)$ stand for the Banach space of all bounded continuous functions $\ph: \R \mapsto \X$, which we equip with the sup-norm defined by
$\|\ph\|_\infty: = \sup_{t \in \R} \left\|\ph(t)\right\|$
for all $\ph \in BC(\R, \X)$.
If $\beta \geq 0$, 
we will also be using the following notions,
$$\|\Phi\|_{\E_\beta, \infty} := \sup_{t \in \R} \|\Phi(t)\|_{\E_\beta}$$ for $\Phi \in BC(\R, \E_\beta)$, and $$\|\ph\|_{\beta, \infty} := \sup_{t \in \R} \|\ph(t)\|_{\beta}$$ for $\ph \in BC(\R, D(A^\beta))$.

\begin{definition}\cite{TDbook} \label{DDD}
A function $f\in C(\R,\X)$ is said to be almost automorphic if for
every sequence of real numbers $(s'_n)_{n \in \N}$, there
   exists a subsequence $(s_n)_{n \in \N}$ such that
      $$ g(t):=\lim_{n\to\infty}f(t+s_n)$$
   is well defined for each $t\in\mathbb{R}$, and
      $$ \lim_{n\to\infty}g(t-s_n)=f(t)$$
   for each $t\in \mathbb{R}$.
\end{definition}

If the convergence above is uniform in $t\in \R$, then $f$ is
almost periodic in the classical Bochner's sense. Denote by
$AA(\X)$ the collection of all almost automorphic functions
$\R\mapsto \X$. Note that $AA(\X)$ equipped with the sup-norm
turns out to be a Banach space.

Among other things, almost automorphic functions satisfy the
following properties.

\begin{theorem}\cite{TDbook}\label{T}
   If $f, f_1, f_2\in AA(\X)$, then
   \begin{itemize}
      \item[(i)] $f_1+f_2\in AA(\X)$,
      \item[(ii)] $\lambda f\in AA(\X)$ for any scalar $\lambda$,
      \item[(iii)] $f_\alpha\in AA(\X)$ where $f_\alpha:\mathbb{R}\to \X$ is defined by
                     $f_\alpha(\cdot)=f(\cdot+\alpha)$,
      \item[(iv)] the range $\mathcal{R}_f:=\big\{f(t):t\in\mathbb{R}\big\}$ is relatively
                    compact in $\X$, thus $f$ is bounded in norm,
      \item[(v)] if $f_n\to f$ uniformly on $\mathbb{R}$ where each $f_n\in AA(\X)$, then $f\in
                   AA(\X)$ too.
   \end{itemize}
\end{theorem}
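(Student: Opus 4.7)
The plan is to handle the five items in increasing order of subtlety. For parts (i) and (ii), I would use the standard two-stage extraction trick: given a sequence $(s'_n)$, first apply Definition \ref{DDD} to $f_1$ to extract a subsequence on which $f_1(t+\cdot)$ converges pointwise to some $g_1$, then apply it to $f_2$ on that subsequence to extract a further subsequence $(s_n)$ with $f_2(t+s_n)\to g_2(t)$. Setting $g:=g_1+g_2$ gives the forward limit, and the same subsequence yields the reverse limit because both $g_1(t-s_n)\to f_1(t)$ and $g_2(t-s_n)\to f_2(t)$ hold simultaneously. Part (ii) is immediate from continuity of scalar multiplication. For (iii), given a sequence $(s'_n)$, extract $(s_n)$ as guaranteed for $f$; then $f_\alpha(t+s_n)=f((t+\alpha)+s_n)\to g(t+\alpha)=:g_\alpha(t)$, and reversing gives $g_\alpha(t-s_n)=g((t+\alpha)-s_n)\to f(t+\alpha)=f_\alpha(t)$.

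For part (iv), I would argue directly that $\mathcal{R}_f$ is sequentially compact. Given $(x_k)=(f(t_k))\subset\mathcal{R}_f$, fix any $t_0\in\R$ and apply Definition \ref{DDD} to the sequence $s'_k:=t_k-t_0$ to extract a subsequence $(s_k)$ on which $f(t_0+s_k)=f(t_k)$ (re-indexed) converges to $g(t_0)\in\X$. Thus every sequence in $\mathcal{R}_f$ admits a convergent subsequence, so $\mathcal{R}_f$ is relatively compact and in particular norm-bounded. No uniformity is needed here; this is just the pointwise definition applied at the single point $t_0$.

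The substantive step is (v), which requires a diagonal extraction compatible with the uniform convergence of $(f_n)$. Given $(s'_k)$, I would recursively build nested subsequences $(s_k^{(n)})$ such that for each fixed $n$, $f_n(t+s_k^{(n)})\to g_n(t)$ pointwise as $k\to\infty$, then form the diagonal $s_k:=s_k^{(k)}$. The key observation is that $(g_n)$ is uniformly Cauchy: taking $k\to\infty$ in
\begin{equation*}
\|g_n(t)-g_m(t)\|\leq \|g_n(t)-f_n(t+s_k)\|+\|f_n-f_m\|_\infty+\|f_m(t+s_k)-g_m(t)\|
\end{equation*}
gives $\|g_n-g_m\|_\infty\leq\|f_n-f_m\|_\infty$, so $g_n\to g$ uniformly for some $g\in BC(\R,\X)$. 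A standard three-epsilon argument (pick $n$ large using uniform convergence of $f_n\to f$ and $g_n\to g$, then $k$ large using the pointwise convergence $f_n(t+s_k)\to g_n(t)$) yields $f(t+s_k)\to g(t)$ pointwise, and the symmetric argument applied to $g(t-s_k)$ and $f_n(t-s_k)$ (using that each $f_n$ is almost automorphic along the full subsequence $(s_k)$) produces $g(t-s_k)\to f(t)$.

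The main obstacle will be verifying the reverse limit in (v): one must check that the diagonal subsequence still witnesses the backward convergence $g_n(t-s_k)\to f_n(t)$ for every $n$, which is exactly what the nested construction guarantees since $(s_k)_{k\geq n}$ is a subsequence of $(s_k^{(n)})$ for each fixed $n$. Everything else in (v) is bookkeeping built on the uniform Cauchy estimate above.
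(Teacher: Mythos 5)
The paper does not prove this theorem: it is quoted verbatim from the reference \cite{TDbook} (Diagana's book), so there is no in-paper argument to compare against. Your proposal is the standard textbook proof and is correct in all five parts: the double extraction for (i), the translation computation for (iii), the single-point application of the definition for (iv) (which is indeed all that is needed, since relative compactness in a Banach space is equivalent to every sequence in the set admitting a convergent subsequence), and the diagonal construction plus the estimate $\|g_n-g_m\|_\infty\leq\|f_n-f_m\|_\infty$ followed by two three-epsilon arguments for (v). The nested-subsequence bookkeeping you flag as the main obstacle is handled exactly right: the diagonal $(s_k)_{k\geq n}$ is a tail of a subsequence of $(s_k^{(n)})$, so both the forward and backward limits for $f_n$ survive. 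One cosmetic imprecision: you assert $g\in BC(\R,\X)$, but the uniform Cauchy estimate only delivers boundedness of $g$, not continuity (the limit function in Definition \ref{DDD} need not be continuous); since the definition of almost automorphy only requires $g$ to be well defined pointwise, this does not affect the validity of the argument.
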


\begin{definition}\label{KKK}Let $\Y$ be another Banach space.
A jointly continuous function $F: \R \times \Y \mapsto \X$ is said
to be almost automorphic in $t \in \R$ if $t \mapsto F(t,x)$ is
almost automorphic for all $x \in K$ ($K \subset \Y$ being any
bounded subset). Equivalently, for every sequence of real numbers
$(s'_n)_{n \in \N}$, there
   exists a subsequence $(s_n)_{n \in \N}$ such that
      $$G(t, x):=\lim_{n\to\infty}F(t+s_n, x)$$
   is well defined in $t\in\mathbb{R}$ and for each $x \in K$, and
      $$ \lim_{n\to\infty}G(t-s_n, x)=F(t, x)$$
   for all $t\in \mathbb{R}$ and $x \in K$.

 The collection of such functions will be denoted by $AA(\R \times \X)$.
\end{definition}

 For
more on almost automorphic functions and their generalizations, we refer
the reader to the recent book by Diagana \cite{TDbook}.

Define (see Diagana \cite{TDbook, TT}) the space 
$PAP_0(\R, \X)$ as the collection of all functions $\ph \in BC(\R, \X)$ satisfying, $$\lim_{r \to \infty}
\displaystyle{\frac{1}{2r}} \int_{-r}^r \|\ph(s)\| ds =
0.$$

Similarly, $PAP_0(\R \times \X)$ will denote the collection of all
bounded continuous functions $F: \R \times \Y \mapsto \X$ such
that
$$\lim_{T \to \infty}
\displaystyle{\frac{1}{2r}} \int_{-r}^r \| F(s, x)\| ds =
0$$ uniformly in $x \in K$, where $K \subset \Y$ is any bounded
subset.

\begin{definition}\label{DEF} (Liang {\it et al.} \cite{L} and {\it Xiao et al.} \cite{LL}) 
A function $f \in BC(\R, \X)$ is called pseudo almost automorphic
if it can be expressed as $f = g + \phi,$ where $g \in AA(\X)$ and
$\phi \in PAP_0(\X)$. The collection of such functions will be
denoted by $PAA({\mathbb X})$.
\end{definition}

The functions $g$ and $\phi$ appearing in Definition~\ref{DEF} are
respectively called the {\it almost automorphic} and the {\it
ergodic perturbation} components of $f$.

\begin{definition} Let $\Y$ be another Banach space.
A bounded continuous function $F: \R \times \Y \mapsto \X$ belongs
to $AA(\R \times \X)$ whenever it can be expressed as $F = G + \Phi,$
where $G\in AA(\R \times \X)$ and $\Phi \in PAP_0(\R \times \X)$. The
collection of such functions will be denoted by $PAA(\R \times \X)$.
\end{definition}

A substantial result is the next theorem, which is due to Xiao et
al. \cite{LL}.

\begin{theorem}\label{MN} \cite{LL} The space $PAA(\X)$ equipped with the sup
norm $\|\cdot\|_\infty$ is a Banach space.
\end{theorem}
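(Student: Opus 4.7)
The plan is to realize $PAA(\X)$ as a closed linear subspace of the Banach space $BC(\R,\X)$, since closed subspaces of Banach spaces inherit completeness. So there are three things to verify: $PAA(\X)$ is a linear subspace, the decomposition $f = g + \phi$ is unique, and $PAA(\X)$ is closed under uniform limits.

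\textbf{Linearity.} If $f_i = g_i + \phi_i$ with $g_i \in AA(\X)$ and $\phi_i \in PAP_0(\X)$ for $i=1,2$, and $\mu \in \C$, then $f_1 + \mu f_2 = (g_1 + \mu g_2) + (\phi_1 + \mu \phi_2)$; the first summand lies in $AA(\X)$ by Theorem~\ref{T}\,(i)--(ii), while the obvious linearity of the ergodic zero-mean condition handles the second summand.

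\textbf{Uniqueness.} The crucial lemma I would establish is $AA(\X) \cap PAP_0(\X) = \{0\}$. Given $h$ in the intersection, the scalar function $u := \|h(\cdot)\|$ is nonnegative, almost automorphic (by continuity of the norm together with the relative compactness of the range from Theorem~\ref{T}\,(iv)), and has zero ergodic mean. If $u(t_0) > 0$, continuity forces $u \geq c$ on an interval around $t_0$ for some $c > 0$, and the two-sided translation recurrence built into almost automorphy — for any sequence $(s_n')$ there exists a subsequence $(s_n)$ with $u(\cdot + s_n) \to v(\cdot)$ pointwise and $v(\cdot - s_n) \to u(\cdot)$ pointwise — can be used to produce infinitely many translates on which $u$ remains bounded below by $c/2$ on intervals of uniform length, which builds up a set of positive Banach density in $[-r,r]$ on which $u$ is bounded away from zero. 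This contradicts $\frac{1}{2r}\int_{-r}^{r} u(s)\,ds \to 0$, so $h \equiv 0$. This in particular makes the projections $\pi_{AA}, \pi_{PAP_0}: PAA(\X) \to BC(\R,\X)$ well-defined.

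\textbf{Closedness.} Suppose $(f_n) \subset PAA(\X)$ converges uniformly to $f \in BC(\R,\X)$, and write $f_n = g_n + \phi_n$. A consequence refining the previous step is the a priori estimate $\|g\|_\infty \leq \|f\|_\infty$ for every $f = g + \phi \in PAA(\X)$: at each $t_0$, one selects a sequence $s_k \to \infty$ with $\|\phi(t_0 + s_k)\| \to 0$ (such $s_k$ exist since $\{s : \|\phi(t_0 + s)\| \geq \epsilon\}$ has density zero), then extracts a subsequence via AA so that $g(t_0 + s_k) \to \tilde g(t_0)$, and passes through the reverse AA limit $\tilde g(t_0 - s_k) \to g(t_0)$ while combining with the density-zero property on the reverse translates to transfer the bound back to $g(t_0)$. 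Applied to the differences, this shows $(g_n)$ is Cauchy in $BC(\R,\X)$, hence converges uniformly to some $g$, which lies in $AA(\X)$ by Theorem~\ref{T}\,(v). Then $\phi_n = f_n - g_n$ converges uniformly to $\phi := f - g$, and $\phi \in PAP_0(\X)$ because $PAP_0(\X)$ is plainly closed in $BC(\R,\X)$ under the sup norm (the ergodic-mean functional is continuous with respect to $\|\cdot\|_\infty$). Hence $f = g + \phi \in PAA(\X)$.

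The principal obstacle is the uniqueness lemma $AA(\X) \cap PAP_0(\X) = \{0\}$ and the companion estimate $\|g\|_\infty \leq \|f\|_\infty$, both of which require exploiting the full two-sided almost-automorphic translation recurrence against the density-zero behaviour of the ergodic perturbation — a delicate interplay between dynamical and measure-theoretic ingredients. Once these are in place, completeness of $PAA(\X)$ is immediate from the closed-subspace principle.
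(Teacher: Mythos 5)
First, a caveat on the comparison: the paper offers no proof of Theorem~\ref{MN} at all --- it is imported verbatim from \cite{LL} --- so the only meaningful benchmark is the argument in that cited source. Your overall architecture is the standard one and matches it: exhibit $PAA(\X)$ as a closed linear subspace of $BC(\R,\X)$ by proving (i) uniqueness of the decomposition, i.e.\ $AA(\X)\cap PAP_0(\X)=\{0\}$, and (ii) the a priori bound $\|g\|_\infty\le\|f\|_\infty$ on the almost automorphic component, then invoke closedness of $AA(\X)$ (Theorem~\ref{T}(v)) and of $PAP_0(\X)$ together with the Cauchy argument. In the literature both (i) and (ii) are packaged as the single inclusion $g(\R)\subseteq\overline{f(\R)}$ for every decomposition $f=g+\phi$. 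Your linearity and closedness steps are fine.

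The gap is in your sketch of the crucial lemma, which is where the entire difficulty of the theorem lives. From $u:=\|h(\cdot)\|\ge c$ on $J=[t_0-\delta,t_0+\delta]$ and the two-sided pointwise recurrence of Definition~\ref{DDD}, you infer ``infinitely many translates on which $u$ remains bounded below by $c/2$ on intervals of uniform length,'' hence positive Banach density. This fails for two reasons. First, almost automorphic functions need not be uniformly continuous (the classical example $\sin\bigl(1/(2+\cos t+\cos\sqrt{2}\,t)\bigr)$ is not), and the convergence in Definition~\ref{DDD} is only pointwise along a subsequence; a lower bound at the centre of a translated interval therefore does not propagate to an interval of uniform length around it. The almost periodic proof, which rests on the relative density of the set of $\varepsilon$-translation numbers, has no analogue here. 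Second, even the salvageable version of the step --- dominated convergence does give $\int_{J+s_n-s_m}u\ge c\delta$ for $n\gg m\gg 1$ --- only produces good translates indexed by the difference set $\{s_n-s_m\}$ of a subsequence of an arbitrary sequence, and such a set can have zero upper density (take $s_n=2^n$), so no contradiction with $\frac{1}{2r}\int_{-r}^{r}u\,ds\to0$ follows. The same issue undermines the ``transfer the bound back'' step for $\|g\|_\infty\le\|f\|_\infty$: the intermediate limit $\tilde g$ need not be continuous, and the bound $\|\tilde g(t_0)\|\le\|f\|_\infty$ at the single point $t_0$ cannot be upgraded to the bound at the points $t_0-s_k$ actually needed to recover $g(t_0)$, because the density-zero selection of the $s_k$ was made only relative to the base point $t_0$. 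What is required --- and what \cite{LL} supplies --- is a direct proof of $g(\R)\subseteq\overline{f(\R)}$, after which uniqueness and the norm estimate are immediate; as it stands, your argument for the central lemma does not close.
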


\begin{theorem}\cite{LL}\label{CO} If $\Y$ is another Banach space, $f: \R \times \Y \mapsto \X$ belongs to $PAA(\R \times \X)$
and if $x \mapsto f(t,x)$ is uniformly continuous on each bounded
subset $K$ of $\Y$ uniformly in $t \in \R$, then the function defined
by $h(t) = f(t, \varphi(t))$ belongs to $PAA(\X)$ provided
$\varphi \in PAA(\Y)$.
\end{theorem}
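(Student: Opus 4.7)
The plan is to adapt the standard decomposition strategy for composition of pseudo-almost automorphic functions. First I would write $f = G + \Phi$ with $G \in AA(\R \times \X)$ and $\Phi \in PAP_0(\R \times \X)$, and $\varphi = \alpha + \beta$ with $\alpha \in AA(\Y)$ and $\beta \in PAP_0(\Y)$. Then I would split
\[
h(t) = G(t,\alpha(t)) + \Phi(t,\alpha(t)) + \bigl[ f(t,\varphi(t)) - f(t,\alpha(t)) \bigr],
\]
and aim to show that the first summand lies in $AA(\X)$ while the last two lie in $PAP_0(\X)$. The first piece follows from the classical composition theorem for almost automorphic functions (sequential extraction of subsequences combined with the relative compactness of $\mathcal{R}_\alpha$ granted by Theorem \ref{T}(iv)), so the heart of the argument lies in handling the two ergodic pieces.

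For the third summand I would exploit the hypothesis that $f(t,\cdot)$ is uniformly continuous on bounded sets uniformly in $t \in \R$. Given $\ep > 0$, pick $\de > 0$ such that $\|\beta(t)\| < \de$ implies $\|f(t,\varphi(t)) - f(t,\alpha(t))\| < \ep$. Splitting $[-r,r]$ into the good set $\{t: \|\beta(t)\| < \de\}$ and its complement, I would bound the measure of the latter via Chebyshev's inequality applied to $\int_{-r}^r \|\beta(t)\|\,dt$, which has vanishing Ces\`aro mean since $\beta \in PAP_0(\Y)$. Dividing by $2r$ and sending $r \to \infty$ then yields a limit bounded by $\ep$; since $\ep$ is arbitrary, this term lies in $PAP_0(\X)$.

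The main obstacle is the middle summand $\Phi(t,\alpha(t))$, because the membership $\Phi(\cdot,x) \in PAP_0(\X)$ is a priori only a pointwise-in-$x$ statement. Here I would use that $\overline{\mathcal{R}_\alpha}$ is compact in $\Y$ to cover it by finitely many $\de$-balls centered at points $x_1,\dots,x_N$, with $\de$ chosen via the uniform continuity of $\Phi(t,\cdot)$ on bounded sets. This lets me estimate
\[
\|\Phi(t,\alpha(t))\| \leq \ep + \sum_{i=1}^N \|\Phi(t,x_i)\|
\]
uniformly in $t$; averaging over $[-r,r]$ and letting $r \to \infty$ then produces an $\ep$-bound. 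The point requiring the most care will be justifying the uniform continuity of $\Phi(t,\cdot)$ on bounded sets: it is not listed among the hypotheses, and must be transferred from the given uniform continuity of $f$ by deducing the analogous property for $G \in AA(\R \times \X)$ through a limiting argument along the sequences defining its almost automorphy.
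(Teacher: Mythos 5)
The paper does not prove Theorem~\ref{CO}; it imports it from \cite{LL} without proof, and your argument is precisely the standard one from that source: the three-term splitting $h = G(\cdot,\alpha(\cdot)) + \Phi(\cdot,\alpha(\cdot)) + \bigl[f(\cdot,\varphi(\cdot)) - f(\cdot,\alpha(\cdot))\bigr]$, the Chebyshev-type estimate on the measure of $\{t \in [-r,r] : \|\beta(t)\| \geq \delta\}$, and the finite $\delta$-net of the compact set $\overline{\mathcal{R}_\alpha}$. The one delicate point you correctly flag --- transferring the uniform continuity from $f$ to $G$ (hence to $\Phi$) --- is indeed the needed auxiliary lemma; just note that it is usually derived from the inclusion of the range of the almost automorphic component of $t \mapsto f(t,x) - f(t,y)$ in the closure of the range of $f(\cdot,x) - f(\cdot,y)$, which uses the ergodicity of the $PAP_0$ part and not only the subsequences furnished by almost automorphy.
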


 For
more on pseudo-almost automorphic functions and their generalizations, we refer
the reader to the recent book by Diagana \cite{TDbook}.

\section{Main Results}Fix $\beta \in (0, 1)$.
Consider the first-order differential equations,
\begin{eqnarray}\label{LT}
\frac{d\ph}{dt} = A \ph + g(t), \ \ t \in \R,
\end{eqnarray}and
\begin{eqnarray}\label{LT2}
\frac{d\ph}{dt} = (A +B)\ph + f(t, \ph), \ \ t \in \R,
\end{eqnarray}
where $A: D(A) \subset \X \mapsto \X$ is a sectorial linear operator on a Banach space $\X$, $B: C(\R, D(A)) \mapsto \X$ is a linear operator, and $g: \R \mapsto \X$ and $f: \R \times \X \mapsto \X$ are bounded continuous functions.

To study the existence of pseudo-almost automorphic mild
solutions to Eq. (\ref{LT}) (and hence Eq. (\ref{LT2})), we will need the following assumptions,

\begin{enumerate}
  \item [(H.1)] The linear operator $A$ is sectorial. Moreover, if $T(t)$ denotes the analytic semigroup associated with it, we suppose that $T(t)$
is hyperbolic, that is, $$\sigma(A) \cap i\R = \emptyset.$$

\item[(H.2)] The semigroup $T(t)$ is not only compact for $t > 0$ but also is exponentially stable, i.e., there exists constants $N, \delta > 0$ such that $$\|T(t)\| \leq N e^{-\delta t}$$ for $t \geq 0$.

\item[(H.3)] The linear operator $B: BC(\R, \X_{\beta}) \mapsto \X$, where $\X_{\beta}:= D((-A)^{\beta})$, is bounded . Moreover, the following holds,
$$\displaystyle C_0 := \|B\|_{B(BC(\R, \X_{\beta}), \X)} \leq \frac{1}{2d(\beta)},$$ where $d (\beta) := M(\beta) (2\delta^{-1})^{1-\beta} \Gamma(1-\beta)$.
\item[(H.4)] 
The function $f: \R \times \X_\beta \mapsto \X$ is pseudo-almost automorphic in the first variable uniformly in the second one. For each bounded subset $K \subset \X_\beta$, $f(\R, K)$ is bounded. Moreover, the function $u \mapsto f(t,u)$ is uniformly continuous on any bounded
subset $K$ of $\X_\beta$ for each $t \in \R$. Finally, we suppose that there exists $L > 0$ such that
$$\sup_{t \in \R, \ \ \|\ph\|_{\beta} \leq L} \Big\|f(t,\ph)\Big\| \leq \frac{L}{ 2d(\beta)}.$$
\item[(H.5)] If $(u_n)_{n \in \N} \subset PAA(\X_\beta)$ is uniformly bounded and uniformly convergent upon every compact subset of $\R$, then $f(\cdot, u_n(\cdot))$ is relatively compact in $BC(\R, \X)$.
\end{enumerate}

\begin{remark}\label{map} Note that if (H.3) holds, then it can be easily shown that the linear operator $B$ maps $PAA(\X_\beta)$ into $PAA(\X)$.
\end{remark}

\begin{definition}
Under assumption (H.1), a continuous function $\ph: \R \mapsto \X$ is said to be a mild
solution to Eq. (\ref{LT}) provided that
\begin{eqnarray}\label{M}
\ph(t)=T(t-s) \ph(s) + \int_{s}^{t}T(t-\tau) g(\tau)d\tau, \quad \forall (t,s) \in \T.
\end{eqnarray}
\end{definition}

\begin{lemma}\cite{TDbook}\label{R}
Suppose assumptions {\rm (H.1)--(H.2)} hold. If 
$g: \R \mapsto \X$ is a bounded continuous function, then $\ph$ given by
\begin{eqnarray}\label{VCF1} \ph(t) := \int_{-\infty}^t T(t-s) g(s) ds\end{eqnarray}
for all $t\in \R$, is the unique bounded mild solution to Eq. (\ref{LT}).
\end{lemma}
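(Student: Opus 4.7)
The plan is to prove, in order, (i) that the improper integral defining $\ph$ converges and yields a bounded continuous function on $\R$; (ii) that $\ph$ satisfies the variation-of-constants identity \eqref{M}; and (iii) uniqueness within the class of bounded mild solutions. The structure mirrors a standard argument that becomes available once the hyperbolic/exponential-stability assumption (H.2) is in force, since the projection $P$ collapses to the identity and one does not have to split the integral into stable and unstable parts.

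First I would use (H.2) to control the integrand: for $s \le t$, $\|T(t-s)g(s)\| \le N e^{-\delta(t-s)} \|g\|_\infty$, so
\begin{equation*}
\int_{-\infty}^t \|T(t-s)g(s)\|\, ds \le \frac{N}{\delta}\|g\|_\infty,
\end{equation*}
and the Bochner integral \eqref{VCF1} exists and satisfies $\|\ph(t)\| \le N\delta^{-1}\|g\|_\infty$, giving $\ph \in BC(\R,\X)$. Continuity of $\ph$ follows by a routine dominated-convergence argument, exploiting the strong continuity of $T(\cdot)$ on $(0,\infty)$ together with the uniform exponential majorant $Ne^{-\delta(t-s)}\|g\|_\infty$.

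Next I would verify identity \eqref{M}. For $(t,s) \in \T$, split
\begin{equation*}
\ph(t) = \int_{-\infty}^{s} T(t-\tau)g(\tau)\,d\tau \;+\; \int_s^t T(t-\tau) g(\tau)\,d\tau,
\end{equation*}
and on the first integral rewrite $T(t-\tau) = T(t-s)T(s-\tau)$ by the semigroup law. Since $T(t-s)$ is a bounded linear operator on $\X$, it commutes with the Bochner integral, yielding
\begin{equation*}
\int_{-\infty}^{s} T(t-\tau) g(\tau)\,d\tau = T(t-s)\int_{-\infty}^{s} T(s-\tau) g(\tau)\,d\tau = T(t-s)\ph(s),
\end{equation*}
which gives exactly \eqref{M}; so $\ph$ is a mild solution.

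Finally, for uniqueness, suppose $\psi \in BC(\R,\X)$ is another mild solution and set $w := \ph - \psi$. From \eqref{M} applied to both and linearity, $w(t) = T(t-s) w(s)$ for every $(t,s) \in \T$. By (H.2),
\begin{equation*}
\|w(t)\| \le N e^{-\delta(t-s)} \|w\|_\infty \xrightarrow[s \to -\infty]{} 0,
\end{equation*}
so $w \equiv 0$. I expect the only mildly delicate step to be the commutation of $T(t-s)$ with the improper Bochner integral: this needs the dominating bound $\|T(t-\tau)g(\tau)\| \le Ne^{-\delta(t-\tau)}\|g\|_\infty$ to justify passing to the limit over truncations $\int_{-R}^{s}$, after which boundedness of $T(t-s)$ gives the desired identity. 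Everything else is an almost mechanical consequence of (H.1)--(H.2).
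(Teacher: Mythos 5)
Your proof is correct: the exponential bound from (H.2) gives absolute convergence and boundedness, the semigroup law plus commutation of the bounded operator $T(t-s)$ with the Bochner integral yields \eqref{M}, and the decay of $T(t-s)w(s)$ as $s\to-\infty$ forces uniqueness among bounded mild solutions. The paper itself gives no proof of Lemma \ref{R} (it is quoted from \cite{TDbook}), and your argument is precisely the standard one behind that citation, so there is nothing substantive to compare beyond noting that your verification of continuity and of the interchange of $T(t-s)$ with the improper integral fills in the details the paper leaves implicit.
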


\begin{definition}
Under assumptions (H.1), (H.2), and (H.3) and if 
$f: \R \times \X_\beta \mapsto \X$ is a bounded continuous function, then a continuous function $\ph: \R \mapsto \X_\beta$ satisfying 
\begin{eqnarray}\label{VCF} \ph(t) = T(t-s) \ph(s) + \int_{s}^{t} T(t-s) \Big[B \ph(s) + f(s, \ph(s))\Big] ds,\quad \forall (t,s) \in \T\end{eqnarray}
is called a mild solution to Eq. (\ref{LT2}).
\end{definition}

Under assumptions (H.1), (H.2), and (H.3) and if 
$f: \R \times \X_\beta \mapsto \X$ is a bounded continuous function, it can be shown that the function $\ph: \R \mapsto \X_\beta$ defined by 
\begin{eqnarray}\label{VCF} \ph(t) = \int_{-\infty}^{t} T(t-s) \Big[B\ph(s) + f(s, \ph(s))\Big] ds\end{eqnarray}
for all $t\in \R$, is a mild solution to Eq. (\ref{LT2}).

Define the following integral operator,
$$(S \ph)(t) =\int_{-\infty}^{t}T(t-s) \Big[B \ph(s) + f(s, \ph(s))\Big] ds.$$

We have

\begin{lemma}\label{B}Under assumptions {\rm (H.1)--(H.2)--(H.3)} and if 
$f: \R \times \X_\beta \mapsto \X$ is a bounded continuous function, then the mapping $S: BC(\R,\X_\beta) \mapsto BC(\R, \X_\beta)$ is well-defined and continuous.

\end{lemma}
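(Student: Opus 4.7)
The plan is to carry out three subtasks in sequence: (1) verify that $(S\ph)(t)$ is a well-defined element of $\X_\beta$ with a uniform-in-$t$ bound; (2) show that $t \mapsto (S\ph)(t)$ is continuous into $\X_\beta$; and (3) show that the map $\ph \mapsto S\ph$ is continuous on $BC(\R,\X_\beta)$. Everything rests on the smoothing estimate (\ref{eq1.100}) supplied by (H.1)-(H.2), which in the exponentially stable setting reads $\|T(\tau)x\|_\beta \le M(\beta)\tau^{-\beta}e^{-\gamma\tau/2}\|x\|$, together with the elementary identity $\int_0^\infty \tau^{-\beta}e^{-\gamma\tau/2}\,d\tau = (2/\gamma)^{1-\beta}\Gamma(1-\beta)$, which is precisely the content of the constant $d(\beta)$ appearing in (H.3).

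For Step (1), fix $\ph \in BC(\R,\X_\beta)$ and set $g(s) := (B\ph)(s) + f(s,\ph(s)) \in \X$. By (H.3) and boundedness of $f$, $\|g\|_\infty \le C_0\|\ph\|_{\beta,\infty} + \|f\|_\infty$. Applying the smoothing estimate to the integrand of $(S\ph)(t)$ and substituting $\tau = t-s$, the integral converges absolutely in $\X_\beta$ and yields
\[
\|(S\ph)(t)\|_\beta \le d(\beta)\bigl(C_0\|\ph\|_{\beta,\infty} + \|f\|_\infty\bigr)
\]
uniformly in $t$.

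For Step (2), I would split $(S\ph)(t+h) - (S\ph)(t)$ for small $h > 0$ into a near piece $I_1(h) = \int_t^{t+h} T(t+h-s)g(s)\,ds$ and a far piece $I_2(h) = \int_{-\infty}^t T(t-s)(T(h)-I)g(s)\,ds$ using the semigroup property. The $\beta$-norm of $I_1(h)$ is bounded by $M(\beta)\|g\|_\infty\,h^{1-\beta}/(1-\beta) \to 0$. For $I_2(h)$, the strong continuity of $T(\cdot)$ at $0$ on $\X$ forces $\|(T(h)-I)g(s)\| \to 0$ pointwise; domination by $2M(\beta)(t-s)^{-\beta}e^{-\gamma(t-s)/2}\|g\|_\infty$ (which is integrable on $(-\infty,t)$) then closes the argument via dominated convergence. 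The case $h < 0$ is symmetric.

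For Step (3), if $\ph_n \to \ph$ in $BC(\R,\X_\beta)$ then by (H.3) the contribution of the $B$-term to $(S\ph_n)(t) - (S\ph)(t)$ is controlled by $d(\beta)C_0\|\ph_n - \ph\|_{\beta,\infty}$, which tends to $0$ uniformly in $t$. The $f$-term is handled by dominated convergence applied after the substitution $\tau = t-s$: pointwise continuity of $f$ gives $\|f(t-\tau,\ph_n(t-\tau)) - f(t-\tau,\ph(t-\tau))\| \to 0$, while uniform boundedness of $f$ on a bounded neighborhood containing all $\ph_n$ supplies the dominating function $2\|f\|_\infty\cdot\tau^{-\beta}e^{-\gamma\tau/2}$. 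The main obstacle I anticipate is precisely this last step: extracting \emph{uniform-in-$t$} convergence from only jointly continuous (not uniformly continuous) $f$. The rescue is the singular-but-integrable kernel $\tau^{-\beta}e^{-\gamma\tau/2}$, which effectively localizes the convolution and lets the dominated convergence argument survive the supremum over $t$, once one notes that the $\ph_n$'s range uniformly inside a bounded set on which $f$ is bounded.
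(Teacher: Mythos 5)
Your proposal is correct and follows essentially the same route as the paper's proof: the smoothing estimate of Eq.~(\ref{eq1.100}) yields the uniform bound $d(\beta)\bigl(C_0\|\ph\|_{\beta,\infty}+\|f\|_\infty\bigr)$, and continuity of $S$ is obtained by splitting off the $B$-contribution (controlled by $d(\beta)C_0\|u_n-u\|_{\beta,\infty}$ via (H.3)) and treating the $f$-contribution by dominated convergence against the integrable kernel $(t-s)^{-\beta}e^{-\delta(t-s)/2}$. You are in fact slightly more thorough in two places: your Step~(2) proves continuity of $t\mapsto(S\ph)(t)$, which the paper's well-definedness argument silently omits, and you explicitly flag the pointwise-versus-uniform-in-$t$ subtlety in the dominated convergence step, a point the paper passes over without comment.
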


\begin{proof}
We first show that $S$ is well-defined and that $S (BC(\R, \X_\beta)) \subset BC(\R, \X_\beta)$. Indeed, letting $u \in BC(\R, \X_\beta)$, $g(t) := f(t,u(t))$, and using Eq. (\ref{eq1.100}),
we obtain
\begin{eqnarray*}
\big\|S u(t)\big\|_\beta &\leq & \int_{-\infty }^t\big\|T(t-s) [B u(s) + g(s)]
\big\|_\beta ds\nonumber\\
& \leq & \int_{-\infty }^t
M(\beta)e^{- \frac{\delta}{2}(t-s)}(t-s)^{1-\beta} 
\Big[\|B u(s)\| + \|g(s)\|\Big]ds\nonumber\\
& \leq & \int_{-\infty }^t
M(\beta)e^{- \frac{\delta}{2}(t-s)}(t-s)^{1-\beta} 
\Big[C_0\|u(s)\|_\beta + \|g(s)\|\Big]ds\nonumber\\
& \leq & d(\beta) \Big(C_0 \|u\|_{\beta, \infty} + \|g\|_\infty\Big)\label{bound},
\end{eqnarray*} 
for all $t \in \R$, where $d :=M(\beta) (2\delta^{-1})^{1-\beta} \Gamma(1-\beta)$, and hence $Su: \R \mapsto \X_\beta$ is bounded.

To complete the proof it remains to show that $S$ is continuous. For that, set$$F(s, u(s)) := B u(s) + g(s) = B u(s) + f(s, u(s)), \ \ \forall s \in \R.$$ 

Consider an arbitrary sequence of
functions
$u_n \in BC(\R, \X_\beta)$ that converges uniformly to some $u \in BC(\R, \X_\beta)$, that is,
$\big\|u_n -u\big\|_{\beta, \infty} \to 0 \quad \mbox{as} \ \ n \to \infty.$

Now 
\begin{eqnarray*}
\big\|Su(t) - Su_n(t)\big\|_\beta &=& \big\|\int_{-\infty}^t T(t-s) [F(s, u_n(s))-F(s, u(s))]\>ds\big\|_{\beta}\\
&\leq& M(\beta) \int_{-\infty}^t (t-s)^{-\beta} e^{-\frac{\delta}{2}\>(t-s)}
\big\|F(s, u_n(s))-F(s, u(s))\big\|\>ds.\\
&\leq& M(\beta) \int_{-\infty}^t (t-s)^{-\beta} e^{-\frac{\delta}{2}\>(t-s)}
\big\|f(s, u_n(s))-f(s, u(s))\big\|\>ds\\
&+& M(\beta) \int_{-\infty}^t (t-s)^{-\beta} e^{-\frac{\delta}{2}\>(t-s)}
\big\|B (u_n(s)- u(s))\big\|\>ds\\
&\leq& M(\beta) \int_{-\infty}^t (t-s)^{-\beta} e^{-\frac{\delta}{2}\>(t-s)}
\big\|f(s, u_n(s))-f(s, u(s))\big\|\>ds\\
&+& d(\beta) C_0 \,\|u_n - u\|_{\beta, \infty}.\\
\end{eqnarray*}

Using the continuity of the function $f: \R \times \X_\beta \mapsto \X$ and the Lebesgue Dominated Convergence Theorem we conclude that
\begin{eqnarray*}
\big\|\int_{-\infty}^t T(t-s) P(s) [f(s, u_n(s))-f(s, u(s))]\>ds\big\| \to 0\>\>\>\mbox{as}\>\>\> n\to\infty.
\end{eqnarray*}
Therefore,
$\big\|S u_n - S u\big\|_{\beta, \infty} \to 0$
as $n \to \infty$. The proof is complete.

\end{proof}

\begin{lemma}\label{POL} Under assumptions {\rm (H.1)---(H.4)}, then $S(PAA(\X_\beta) \subset PAA(\X_\beta)$.

\end{lemma}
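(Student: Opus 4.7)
My plan is to show that for $\ph \in PAA(\X_\beta)$, the function $G(\cdot) := B\ph(\cdot) + f(\cdot, \ph(\cdot))$ lies in $PAA(\X)$, and then show that the convolution operator $\Gamma G(t) := \int_{-\infty}^{t} T(t-s) G(s)\, ds$ maps $PAA(\X)$ into $PAA(\X_\beta)$. Since $S\ph = \Gamma G$, this gives the result.

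First I would verify that $G \in PAA(\X)$. By Remark \ref{map}, the bounded linear operator $B$ maps $PAA(\X_\beta)$ into $PAA(\X)$, so $B\ph \in PAA(\X)$. For the composition $t \mapsto f(t,\ph(t))$, assumption (H.4) provides pseudo-almost automorphy of $f$ in $t$ uniformly in the second variable, together with uniform continuity of $f(t, \cdot)$ on bounded subsets of $\X_\beta$; Theorem \ref{CO} then yields $f(\cdot, \ph(\cdot)) \in PAA(\X)$. Write $G = G_1 + G_2$ with $G_1 \in AA(\X)$ and $G_2 \in PAP_0(\X)$, and split $\Gamma G = \Phi_1 + \Phi_2$ accordingly.

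For the almost automorphic part, given a sequence $(s_n')$ I would extract a subsequence $(s_n)$ such that $H_1(t) := \lim_n G_1(t+s_n)$ exists pointwise and $G_1(t) = \lim_n H_1(t-s_n)$. After the change of variable $\sigma = s - s_n$, one has
\begin{equation*}
\Phi_1(t+s_n) = \int_{-\infty}^{t} T(t-\sigma) G_1(\sigma + s_n)\, d\sigma.
\end{equation*}
The integrand converges in $\X_\beta$-norm pointwise to $T(t-\sigma) H_1(\sigma)$ (using estimate \eqref{eq1.100} applied to $G_1(\sigma+s_n) - H_1(\sigma) \to 0$ in $\X$), and is dominated by the integrable function $\sigma \mapsto M(\beta)(t-\sigma)^{-\beta} e^{-\frac{\delta}{2}(t-\sigma)} \cdot 2\|G_1\|_\infty$. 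Dominated convergence gives $\Phi_1(t+s_n) \to \tilde\Phi_1(t)$ in $\X_\beta$, and the reverse limit is handled identically, so $\Phi_1 \in AA(\X_\beta)$.

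The main technical step is proving $\Phi_2 \in PAP_0(\X_\beta)$. By \eqref{eq1.100} and the substitution $\tau = t-s$, Fubini's theorem yields
\begin{equation*}
\frac{1}{2r}\int_{-r}^{r} \|\Phi_2(t)\|_\beta\, dt \;\leq\; \int_0^\infty M(\beta)\tau^{-\beta} e^{-\frac{\delta}{2}\tau} \left(\frac{1}{2r}\int_{-r-\tau}^{r-\tau} \|G_2(u)\|\, du\right) d\tau.
\end{equation*}
Since $G_2 \in PAP_0(\X)$, the inner mean tends to $0$ as $r \to \infty$ for each fixed $\tau$ (a translation-by-$\tau$ argument combined with the comparison $\frac{1}{2r}\int_{-r-\tau}^{r-\tau} \leq \frac{r+\tau}{r} \cdot \frac{1}{2(r+\tau)}\int_{-r-\tau}^{r+\tau}$), and it is uniformly bounded by $\|G_2\|_\infty$. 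Another application of dominated convergence, with dominating function $M(\beta)\tau^{-\beta}e^{-\frac{\delta}{2}\tau}\|G_2\|_\infty$ (integrable since $\beta \in (0,1)$), shows the right-hand side tends to $0$. Hence $\Phi_2 \in PAP_0(\X_\beta)$, and combined with the previous paragraph $S\ph = \Phi_1 + \Phi_2 \in PAA(\X_\beta)$. The principal obstacle is carrying out the Fubini/dominated-convergence bookkeeping in the $PAP_0$ step, since both $\tau$-integrability near $0$ (where $\tau^{-\beta}$ is singular) and uniformity in $r$ of the inner average must be handled simultaneously.
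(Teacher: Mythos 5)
Your proposal follows essentially the same route as the paper's proof: decompose $B\ph + f(\cdot,\ph(\cdot))$ into its $AA(\X)$ and $PAP_0(\X)$ components via Remark \ref{map} and Theorem \ref{CO}, treat the almost automorphic part by a change of variables and dominated convergence using Eq. \eqref{eq1.100}, and treat the ergodic part by Fubini, translation invariance of $PAP_0$, and dominated convergence. Your handling of the shifted averaging window in the $PAP_0$ step is in fact slightly more careful than the paper's, but the argument is the same.
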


\begin{proof} Let $u \in PAA(\X_\beta)$ and define $h(s): = f(s, u(s)) + B u(s)$ for all $s \in \R$. Using (H.4) and Theorem \ref{CO} it follows that the function $s \mapsto f(s, u(s))$ belongs to $PAA(\X)$.
Similarly, using Remark \ref{map} it follows that the function $s \mapsto B u(s)$ belongs to $PAA(\X)$.
In view of the above, the function $s \mapsto h(s)$ belongs to $PAA(\X)$. 

Now write $h= h_1 + h_2 \in PAA (\X)$ where $h_1 \in AA (\X)$ and $h_2 \in PAP_0(\X)$ and set 
$$R h_j(t) :=  \int_{-\infty }^t
T(t-s) h_j(s)ds  \ \ \mbox{for all} \ \ t \in \R, \ \ j=1, 2.$$

Our first task consists of showing that $R \big(AA(\X)\big) \subset AA(\X_\beta)$. Indeed,
using the fact that $h_1 \in AA (\X)$, for every
sequence of real numbers $(\tau'_n)_{n \in \N}$ there
   exist a subsequence $(\tau_n)_{n \in \N}$ and a function $f_1$ such that
      $$f_1 (t):=\lim_{n\to\infty} h_1 (t+\tau_n)$$
   is well defined for each $t\in\mathbb{R}$, and
      $$\lim_{n\to\infty} f_1 (t-\tau_n)= h_1 (t)$$
   for each $t\in \mathbb{R}$.

Now
\begin{eqnarray*}
(R h_1)(t + \tau_n) - (R f_1)(t) &=& \int_{-\infty}^{t+\tau_n} T(t
+\tau_n -s)
h_1(s)ds -  \int_{-\infty}^{t} T(t-s) f_1(s) ds\nonumber\\
&=& \int_{-\infty}^{t} T(t-s)
h_1(s+\tau_n)ds -\int_{-\infty}^{t} T(t-s)
f_1(s)ds.\nonumber\\
&=& \int_{-\infty}^{t} T(t-s)
\Big(h_1(s+\tau_n) - f_1(s)\Big)ds\nonumber.\\
\end{eqnarray*}

From Eq. (\ref{eq1.100}) and the Lebesgue Dominated Convergence Theorem, it easily follows that
\begin{eqnarray*}
\Big\|\int_{-\infty}^{t} T(t-s) \Big(h_1(s+\tau_n) - f_1(s)\Big)ds \Big\|_\beta &\leq& \int_{-\infty}^{t} \Big\|T(t-s) \Big(h_1(s+\tau_n) - f_1(s)\Big)\Big\|_\beta ds \nonumber\\
&\leq&M(\beta) \int_{-\infty}^{t} (t-s)^{-\beta} e^{-\frac{\delta}{2}(t-s)}
\|h_1(s+\tau_n) - f_1(s)\| ds\\
&\to& 0 \ \ \mbox{as} \ \ n \to \infty,
\end{eqnarray*}
and hence
$$(R f_1) (t) = \lim_{n \to \infty} (R h_1) (t+\tau_n)$$ for all $t \in
\R$.

Using similar arguments as above one obtains that 
$$(R h_1) (t) = \lim_{n \to \infty} (R f_1) (t-\tau_n)$$ for all $t \in
\R$, which yields, $t \mapsto (S h_1)(t)$ belongs to $AA(\X_\beta)$.

The next step consists of showing that $R \big(PAP_0 (\X)\big) \subset PAP_0(\X_\beta)$. Obviously, $R h_2 \in BC (\R, \X_\beta)$ (see Lemma \ref{B}).
Using the fact that  $h_2 \in PAP_0(\X)$ and Eq. (\ref{eq1.100}) it can be easily shown that $(R h_2) \in PAP_0 (\X_\beta)$. Indeed, for $r > 0$,
\begin{eqnarray} \displaystyle \frac{1}{2r} \int_{-r}^r \Big\|\int_{-\infty}^{t} T(t-s) h_2(s)ds\Big\|_\beta  dt 
&\leq& \frac{M(\beta)}{2r}\int_{-r}^{r}\int_0^{\infty}
e^{\frac{\delta}{2} s} s^{-\beta} \Big\|h_2(t-s)\Big\|
dsdt\nonumber\\
&\leq& 
M(\beta)\int_0^{\infty} e^{\frac{\delta}{2} s} s^{-\beta} \left(\frac{1}{2r}\int_{-r}^{r} \Big\|h_2(t-s)\Big\|
dt\right) ds.\nonumber
\end{eqnarray}

Using the fact that $PAP_0(\X)$ is translation-invariant it follows that
$$\displaystyle \lim_{r \to \infty} \frac{1}{2r}\int_{-r}^{r}
\Big\|h_2(t-s)\Big\| dt  = 0,$$ as $t \mapsto h_2(t-s) \in
PAP_0(\X)$ for every $s\in \R$. 

One completes the proof by
using the Lebesgue Dominated Convergence Theorem.
In summary, $(R h_2) \in  PAP_0 (\X_\beta)$, which completes the proof.

\end{proof}

\begin{theorem}\label{AB}
Suppose assumptions {\rm (H.1)---(H.5)} hold, then Eq. (\ref{LT2}) has at least one pseudo-almost automorphic mild solution
\end{theorem}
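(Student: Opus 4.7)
The strategy is to apply the Schauder fixed-point theorem to the operator $S$ introduced just before Lemma \ref{B}, working inside a suitable closed, bounded, convex subset of $PAA(\X_\beta)$. By Lemma \ref{B} the operator $S$ is well-defined and continuous from $BC(\R,\X_\beta)$ into itself, and by Lemma \ref{POL} it maps $PAA(\X_\beta)$ into $PAA(\X_\beta)$. So the space on which Schauder will be applied is naturally
$$D:=\Big\{u\in PAA(\X_\beta): \|u\|_{\beta,\infty}\leq L\Big\},$$
where $L>0$ is the constant supplied by assumption (H.4). The set $D$ is closed, bounded, and convex (and nonempty, containing $0$) in the Banach space $PAA(\X_\beta)$ (cf.\ Theorem \ref{MN}).

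First I would verify that $S$ leaves $D$ invariant. For $u\in D$, the estimate in the proof of Lemma \ref{B}, combined with (H.3) and (H.4), gives
$$\|Su(t)\|_\beta \leq d(\beta)\Big(C_0\|u\|_{\beta,\infty}+\sup_{s\in\R}\|f(s,u(s))\|\Big)\leq d(\beta)\Big(C_0 L+\frac{L}{2d(\beta)}\Big)\leq \frac{L}{2}+\frac{L}{2}=L,$$
using $C_0\leq 1/(2d(\beta))$. Continuity of $S$ on $D$ follows from Lemma \ref{B}.

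The main obstacle is proving that $S(D)$ is relatively compact in $PAA(\X_\beta)$, and this is where (H.2) and (H.5) enter in earnest. I would split $F(s,u(s)):=Bu(s)+f(s,u(s))$, control the $B u$ term via the bound $\|Bu\|_\infty\leq C_0L$ together with Remark \ref{map}, and use (H.5) to handle the nonlinear piece $\{f(\cdot,u(\cdot)):u\in D\}$, which is relatively compact in $BC(\R,\X)$. I would then verify the two classical ingredients: \emph{pointwise relative compactness} of $\{(Su)(t):u\in D\}$ in $\X_\beta$ for each $t\in\R$, obtained by truncating the integral at $(-\infty,t-\varepsilon]$ and using that $T(\varepsilon)$ is compact (by (H.2)) together with the uniform tail-smallness coming from the factor $M(\beta)s^{-\beta}e^{-\delta s/2}$; and \emph{equicontinuity}, obtained by a standard splitting of $(Su)(t+h)-(Su)(t)$ into a boundary integral (controlled by smallness of $|h|$) and a main term where the compactness of $T(t)$ for $t>0$ yields norm-continuity of $t\mapsto T(t)$ in the uniform operator topology. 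Together these give relative compactness on compact subsets of $\R$; combined with the uniform tail decay in the integral representation of $Su$, this upgrades to relative compactness in $BC(\R,\X_\beta)$, hence in $PAA(\X_\beta)$ by Theorem \ref{MN} and Lemma \ref{POL}.

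With invariance $S(D)\subset D$, continuity of $S$ on $D$, and relative compactness of $S(D)$ established, Schauder's fixed-point theorem yields $\varphi\in D$ with $S\varphi=\varphi$, which is the sought pseudo-almost automorphic mild solution of Eq.\ (\ref{LT2}). The most delicate step is the equicontinuity/compactness argument, since one must carefully exploit both the compactness of $T(t)$ for $t>0$ and (H.5); the other properties (invariance, continuity, pseudo-almost automorphy of $Su$) are essentially packaged in Lemmas \ref{B} and \ref{POL}.
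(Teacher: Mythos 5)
Your proposal follows essentially the same route as the paper: the same invariant closed convex ball $\{u\in PAA(\X_\beta):\|u\|_{\beta,\infty}\leq L\}$, the same invariance estimate using $C_0\leq 1/(2d(\beta))$ and (H.4), the same $\varepsilon$-truncation $T(\varepsilon)(Su)(t-\varepsilon)$ exploiting compactness of $T(\varepsilon)$ for pointwise relative compactness, the same appeal to a standard equicontinuity argument, and the same Schauder conclusion (the paper merely applies Schauder on $\overline{co}\,S(B_\beta)$ rather than on the ball itself, an immaterial difference). The step you flag as delicate — upgrading compactness on compact intervals to compactness in the sup norm over $\R$ via (H.5) — is treated at the same level of detail (i.e., largely deferred to references) in the paper's own proof.
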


\begin{proof}  Let $B_\beta = \{u \in PAA(\X_\beta): \|u\|_\beta \leq L\}$. Using the proof of Lemma \ref{B} it follows that $B_\beta$ is a convex and closed set. Now using Lemma \ref{POL} it follows that $S(B_\beta) \subset PAA(\X_\beta)$. 

Now for all $u \in B_\beta$,
\begin{eqnarray*}
\big\|S u(t)\big\|_\beta &\leq & \int_{-\infty }^t\big\|T(t-s) [B u(s) + g(s)]
\big\|_\beta ds\nonumber\\
& \leq & \int_{-\infty }^t
M(\beta)e^{- \frac{\delta}{2}(t-s)}(t-s)^{-\beta} 
\Big[\|B u(s)\| + \|f(s, u(s))\|\Big]ds\nonumber\\
& \leq & \int_{-\infty }^t
M(\beta)e^{- \frac{\delta}{2}(t-s)}(t-s)^{-\beta} 
\Big[C_0\|u(s)\|_\beta + \|f(s, u(s))\|\Big]ds\nonumber\\
& \leq & d(\beta) \Big(\frac{ L}{2d(\beta)} + \frac{L}{2d(\beta)}\Big)\\
&=& L
\end{eqnarray*} 
for all $t \in \R$, and hence $Su \in B_\beta$. 

To complete the proof, we have to prove the following:
\begin{enumerate}
\item[a)] That $V = \{ Su(t): u \in B_\beta\}$ is a relatively compact subset of $\X_\beta$ for each $t \in \R$;
\item[b)] That $W = \{ Su: u \in B_\beta\} \subset BC(\R, \X_\beta)$ is equi-continuous.
\end{enumerate}

To show a), fix $t \in \R$ and consider an arbitrary $\varepsilon > 0$.

Now
\begin{eqnarray*}
(S_{\varepsilon} u)(t) &:=& \int_{-\infty}^{t-\varepsilon}T(t-s) F(s, u(s))ds, \ u \in B_\beta\\
&=& T(\varepsilon) \int_{-\infty}^{t-\varepsilon} T(t-\varepsilon-s) F(s, u(s)) ds, \ u \in B_\beta\\
&=& T(\varepsilon) (S u)(t-\varepsilon), \ u \in B_\beta
\end{eqnarray*}
and hence $V_\varepsilon := \{ S_{\varepsilon} u(t): u \in B_\beta\}$ is relatively compact in $\X_\beta$ as the evolution family $T(\varepsilon)$ is compact by assumption.

Now

\begin{eqnarray*}
&&\big\|S u(t) - T(\varepsilon) \int_{-\infty}^{t-\varepsilon} T(t-\varepsilon-s) F(s, u(s)) ds\big\|_\beta \\
&&\leq  \int_{t-\varepsilon}^t \|T(t-s) F(s, u(s)) \|_\beta ds\\
&&\leq  M(\beta)\int_{t-\varepsilon}^t e^{- \frac{\delta}{2}(t-s)}(t-s)^{-\beta} \left\|F(s, u(s))\right\| ds\\
&&\leq M(\beta) \int_{t-\varepsilon}^t (t-s)^{-\beta} e^{-\frac{\delta}{2}\>(t-s)}
\big\|g(s)\big\|\>ds + M(\beta) \int_{t-\varepsilon}^t (t-s)^{-\beta} e^{-\frac{\delta}{2}\>(t-s)}
\big\|B u(s)\big\|\>ds\\
&&\leq M(\beta) \int_{t-\varepsilon}^t (t-s)^{-\beta} e^{-\frac{\delta}{2}\>(t-s)}
\big\|f(s, u(s))\big\|\>ds
+ M(\beta) C_0 \|u\|_{\beta, \infty} \int_{t-\varepsilon}^t (t-s)^{-\beta} e^{-\frac{\delta}{2}\>(t-s)}\>ds\\
&&\leq M(\beta) L\Big(d^{-1}(\beta) + C_0\Big) \int_{t-\varepsilon}^t (t-s)^{-\alpha} \>ds\\
&&= M(\beta) L\Big(d^{-1}(\beta) + C_0\Big)\varepsilon^{1-\beta} (1-\beta)^{-1},
\end{eqnarray*}
and hence the set $V := \{Su(t): u \in B_\beta\} \subset \X_\beta$ is relatively compact.

The proof for b) follows along the same lines as in Li {\it et al.} \cite[Theorem 31]{li} and hence is omitted. 

The rest of the proof slightly follows along the same lines as in Diagana \cite{MCM}. Indeed, since $B_\beta$ is a closed convex subset of $PAA(\X_\beta)$ and that $S(B_\beta) \subset B_\beta$, it follows that $\overline{co}\,{S(B_\beta)} \subset B_\beta.$ Consequently, 
$$S(\overline{co}\, S(B_\beta)) \subset S(B_\beta) \subset \overline{co}\ S(B_\beta).$$ Further, it is not hard to see that $\{u(t): u \in \overline{co}\,{S(B_\beta)}\}$ is relatively compact in $\X_\beta$
for each fixed $t \in \R$ and that functions in $\overline{co}\,{S(B_\beta)}$ are equi-continuous on $\R$. 
Using Arzel\`a-Ascoli theorem, we deduce that the restriction of $\overline{co}\,{S(B_\beta)}$ to any
compact subset $I$ of $\R$ is relatively compact in $C(I, \X_\beta)$. 

In summary, $S: \overline{co}\,{S(B_\beta)} \mapsto \overline{co}\,{S(B_\beta)}$ is continuous and compact. 
Using the Schauder fixed point it follows that $S$ has a fixed-point, which obviously is a pseudo-almost automorphic
mild solution to Eq. (\ref{LT2}).

\end{proof}

Fix $\alpha \in [\frac{1}{2}, 1)$. 
In order to study Eq. (\ref{OPR1}), we let $\beta = \frac{1}{2}$ and suppose that the following additional assumption holds:
\begin{enumerate}
\item[(H.6)] There exists a function $\rho \in L^1(\R, (0, \infty))$ with $\displaystyle \|\rho\|_{L^1(\R, (0, \infty))} \leq \frac{1}{2d(\frac{1}{2})}$ such that
$$\Big\|\c(t) \ph\Big\|_{\E} \leq \rho(t) \big\|\ph\big\|_{\E_{\frac{1}{2}}}$$ for all $\ph \in \E_{\frac{1}{2}}$ and $t \in \R$.
\end{enumerate}

\begin{corollary}\label{ABB} 
Under assumptions {\rm (H.1)--(H.2)--(H.4)--(H.5)--(H.6)}, then Eq. (\ref{OPR1}) (and hence Eq. (\ref{PRR1}) and Eq. (\ref{PRRR})) has at least one pseudo-almost automorphic mild solution.
\end{corollary}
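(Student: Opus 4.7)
My plan is to realize Corollary \ref{ABB} as a direct specialization of Theorem \ref{AB} with the identifications $\X = \E$, $\X_\beta = \E_{\frac{1}{2}}$ (so $\beta = \frac{1}{2}$), $A = \a$, $B = \b$, and $f(t,\cdot) = F(t,\cdot)$. Under these identifications, hypotheses (H.1), (H.2), (H.4), (H.5) are exactly the ones imposed on $\a$ and on $F$ in the statement of the corollary, so the only step that requires genuine verification is (H.3) for the convolution operator $\b$ defined in Eq. (\ref{PR2}).

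The key observation is that assumption (H.6) immediately forces $\b$ to be a bounded linear operator from $BC(\R,\E_{\frac{1}{2}})$ to $\E$ with sufficiently small norm. Indeed, for any $\ph \in BC(\R,\E_{\frac{1}{2}})$,
\begin{equation*}
\|\b\ph(t)\|_{\E} \leq \int_{-\infty}^{t} \|\c(t-s)\ph(s)\|_{\E}\,ds \leq \int_{-\infty}^{t} \rho(t-s)\,\|\ph(s)\|_{\E_{\frac{1}{2}}}\,ds \leq \|\rho\|_{L^1(\R,(0,\infty))}\,\|\ph\|_{\E_{\frac{1}{2}},\infty},
\end{equation*}
so that $\|\b\|_{B(BC(\R,\E_{\frac{1}{2}}),\E)} \leq \|\rho\|_{L^1(\R,(0,\infty))} \leq (2\,d(\tfrac{1}{2}))^{-1}$, which is precisely (H.3).

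With every hypothesis of Theorem \ref{AB} verified, I conclude that Eq. (\ref{OPR1}) admits at least one mild solution $\Phi \in PAA(\E_{\frac{1}{2}})$. Writing $\Phi = (\ph_1,\ph_2)^{\top}$ and unwinding the first-order system back to the original second-order form, the first component $\ph_1$ yields a pseudo-almost automorphic mild solution to Eq. (\ref{PRRR}), as desired.

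The only step that I expect to require any real care is checking that $\b$ actually maps $PAA(\E_{\frac{1}{2}})$ into $PAA(\E)$ rather than merely into $BC(\R,\E)$, since this is precisely what lets Theorem \ref{AB} apply through Remark \ref{map}. Because $\b$ is a convolution against a kernel controlled in $L^1$ by $\rho$, this follows by splitting $\ph \in PAA(\E_{\frac{1}{2}})$ as $\ph_a + \ph_e$ with $\ph_a \in AA(\E_{\frac{1}{2}})$ and $\ph_e \in PAP_0(\E_{\frac{1}{2}})$, then mirroring the argument in Lemma \ref{POL}: translation invariance together with the Lebesgue dominated convergence theorem handles the almost automorphic piece, while the standard $L^1$-convolution Fubini computation combined with the vanishing Cesàro mean of $\ph_e$ handles the $PAP_0$ piece.
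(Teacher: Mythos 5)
Your treatment of the convolution operator $\b$ is essentially the paper's Step 1: the $L^1$ bound from (H.6) gives $\|\b\|_{B(BC(\R,\E_{\frac{1}{2}}),\E)}\leq\|\rho\|_{L^1(\R,(0,\infty))}\leq (2d(\frac{1}{2}))^{-1}$, and your plan to split $\ph\in PAA(\E_{\frac{1}{2}})$ into its $AA$ and $PAP_0$ parts and handle them by dominated convergence and translation invariance reproduces exactly the argument the paper gives (mirroring Lemma \ref{POL}). That part is fine, as is the identification of $F$ with the nonlinearity of Theorem \ref{AB}.

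The gap is in your claim that (H.1), (H.2), (H.4), (H.5) ``are exactly the ones imposed on $\a$ and on $F$,'' so that only (H.3) needs genuine verification. Hypotheses (H.1)--(H.2) are formulated for the abstract generator of Theorem \ref{AB}; to apply that theorem to Eq. (\ref{OPR1}) you must \emph{verify} them for the operator matrix $\a=\left(\begin{smallmatrix}0&I\\ -A&-A^{\alpha}\end{smallmatrix}\right)$ acting on $\E_{\frac{1}{2}}$, and this is not a hypothesis of the corollary but the bulk of its proof. The corollary is meant to deliver a conclusion about the concrete equation (\ref{PRRR}), whose data are $A$, $\gamma$, $\alpha$, $C$, $f$; the relevant properties of $\a$ must therefore be derived from those data. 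The paper's Step 2 does this: it decomposes $\a=\sum_n\a_nP_n$ into $2\times2$ blocks, diagonalizes each block as $K_n^{-1}J_nK_n$, bounds $\|K_n\|\,\|K_n^{-1}\|$ uniformly and estimates $\|(\lambda-J_n)^{-1}\|$ to obtain the sectorial resolvent bound $\|R(\lambda,\a)\|\leq K/|\lambda|$ on a half-plane; hyperbolicity ($\sigma(\a)\cap i\R=\emptyset$), compactness of ${\mathcal T}(\tau)$ for $\tau>0$ (via compactness of the resolvent of $\a$), and the exponential decay $\|{\mathcal T}(\tau)\|\leq N'e^{-\delta_0\tau}$ all come out of this computation, and the last two rest crucially on the standing root condition \eqref{AS1}, which your argument never invokes. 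None of this is automatic --- $\a$ is neither self-adjoint nor normal, and its stability depends on the interplay between $\gamma$, $\alpha$ and the eigenvalues $\lambda_n$ --- so without this step Theorem \ref{AB} cannot be applied and your proof is incomplete.
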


\begin{proof} It suffices to show that $\a$ and $\b$ satisfy similar assumptions as {\rm (H.1)--(H.2)--(H.3)} and that 
$F$ satisfies similar assumptions as {\rm (H.4)--(H.5)}.

{\bf Step 1}. Assumption (H.6) yields $\b$ satisfies similar assumption as (H.3), where $\b$ is defined by
$$\b \ph(t)  := \int_{-\infty}^t \c(t-s) \ph (s) ds.$$

Indeed, since  the function $\rho$ is integrable, it is clear that the operator $\b$ belong to $B(BC(\R, \E_{\frac{1}{2}}), \E)$ with $\|\b\|_{B(BC(\R, \E_{\frac{1}{2}}), \E)} \leq \|\rho\|_{L^1(\R, (0,\infty))}$. In fact, we take $C_0 = \|\rho\|_{L^1(\R, (0,\infty))}$. The fact the function $t \mapsto \b\ph (t)$ is pseudo-almost automorphic for any $\ph \in PAA(\E_{\frac{1}{2}})$ is guaranteed by Remark \ref{map}. However, for the sake of clarity, we will show it. Indeed, write $\ph = \ph_1 + \ph_2$, where $\ph_1 \in AA(\E_{\frac{1}{2}})$ and $\ph_2 \in PAP_0(\E_{\frac{1}{2}})$. 
Using the fact that the function $t \mapsto \ph_1(t)$ belongs to $AA (\E_{\frac{1}{2}})$, for every
sequence of real numbers $(\tau'_n)_{n \in \N}$ there
   exist a subsequence $(\tau_n)_{n \in \N}$ and a function $\psi_1$ such that
      $$\psi_1 (t):=\lim_{n\to\infty} \ph_1(t+\tau_n)$$
   is well defined for each $t\in\mathbb{R}$, and
      $$\lim_{n\to\infty} \psi_1 (t-\tau_n)= \ph_1 (t)$$
   for each $t\in \mathbb{R}$.

Now
\begin{eqnarray*}
\b \ph_1 (t + \tau_n) - \b \psi_1 (t) &=& \int_{-\infty}^{t+\tau_n} \c(t
+\tau_n - s)
\ph_1 (s)ds -  \int_{-\infty}^{t} \c(t-s) \psi_1(s) ds\nonumber\\
&=& \int_{-\infty}^{t} \c(t -s)
\ph_1(s+\tau_n)ds -\int_{-\infty}^{t} C(t-s)
\psi_1(s)ds\nonumber\\
&=& \int_{-\infty}^{t} \c(t-s)
\Big(\ph_1(s+\tau_n) - \psi_1(s)\Big)ds\nonumber\\
\end{eqnarray*}
and hence
\begin{eqnarray*}\Big\|\b \ph_1 (t + \tau_n) - \b \psi_1 (t)\Big\|_{\E} &\leq& \Big\|\int_{-\infty}^{t} \c(t-s)
\Big(\ph_1(s+\tau_n) - \psi_1(s)\Big)ds\Big\|_{\E}\\
&\leq& \int_{-\infty}^{t} \Big\|\c(t-s)
\Big(\ph_1(s+\tau_n) - \psi_1(s)\Big)\Big\|_{\E} ds\\
&\leq& \int_{-\infty}^{t} \rho(t-s)
\Big\|\ph_1(s+\tau_n) - \psi_1(s)\Big\|_{\E_{\frac{1}{2}}} ds\\
\end{eqnarray*}
which by Lebesgue Dominated Convergence Theorem yields
$$\lim_{n \to \infty} \Big\|\b \ph_1 (t + \tau_n) - \b \psi_1 (t)\Big\|_{\E} = 0.$$

Using similar arguments, we obtain
$$\lim_{n \to \infty} \Big\|\b\psi_1(t - \tau_n) - \b\phi_1(t)\Big\|_{\E} = 0.$$

For $r > 0$,
\begin{eqnarray} \displaystyle \frac{1}{2r} \int_{-r}^r \Big\|\int_{-\infty}^{t} \c(t-s) \ph_2(s)ds\Big\|_{\E}  dt 
&\leq& \frac{1}{2r}\int_{-r}^{r}\int_0^{\infty}
\rho(s) \Big\|\ph_2(t-s)\Big\|_{\E_{\frac{1}{2}}}
dsdt\nonumber\\
&\leq& 
\int_0^{\infty} \rho(s) \left(\frac{1}{2r}\int_{-r}^{r} \Big\|\ph_2(t-s)\Big\|_{\E_{\frac{1}{2}}}
dt\right) ds.\nonumber
\end{eqnarray}

Now
$$\displaystyle \lim_{r \to \infty} \frac{1}{2r}\int_{-r}^{r}
\Big\|\ph_2(t-s)\Big\|_{\E_{\frac{1}{2}}} dt  = 0,$$ as $t \mapsto \ph_2(t-s) \in
PAP_0(\E_{\frac{1}{2}})$ for every $s\in \R$. 

Therefore,
$$\lim_{r \to \infty} \frac{1}{2r} \int_{-r}^r \Big\|\int_{-\infty}^{t} \c(t-s) \ph_2(s)ds\Big\|_{\E}  dt = 0$$
by
using the Lebesgue Dominated Convergence Theorem.
\\
{\bf Step 2}. Clearly, the operator $\a$ satisfies similar assumptions as {\rm (H.1)--(H.2)} in the space $\E_{\frac{1}{2}}$. Indeed, for all $\ph \in D(\a)$,
we have
\begin{align*}
\a\ph = \sum_{n=1}^{\infty} \a_n  P_n \ph,
\end{align*}
where \begin{equation*} P_n := \left (\begin{matrix}
           E_n & 0\\ \\
           0 & E_n\\
         \end{matrix}
       \right) \ \ \mbox{and} \ \
\a_n := \left (\begin{matrix}
           0 & 1\\ \\
           -\lambda_n & -\lambda_n^\alpha \\
         \end{matrix}
       \right),\; n \geq 1.
      \end{equation*}

The characteristic equation for $\a_n$ is given by
\begin{equation*}\label{D}
\rho^2 + 2\gamma \lambda_n^\alpha \rho + \lambda_n= 0,
\end{equation*}
from which we obtain its eigenvalues given by
$$\rho_1^n =  \lambda_n^\alpha \Big(-\gamma + \sqrt{\gamma^2 - \lambda_n^{1-2\alpha}}\Big) \ \ \mbox{and} \ \ \lambda_2^n = \lambda_n^\alpha \Big(-\gamma - \sqrt{\gamma^2 - \lambda_n^{1-2\alpha}}\Big),$$and hence
 $\sigma(\a_n) = \Big\{\rho_1^n , \rho_2^n \Big\}.$

Using Eq. (\ref{AS1}) it follows that there exists $\omega > 0 $ such that 
$\rho(\a)$ contains the halfplane $$S_{\omega} :=\Big\{\lambda \in \C: \Re e\, \lambda \geq \omega \Big\}.$$

Now since $\rho_1^n$ and $\rho_2^n$ are distinct and that each of them is of multiplicity one, then $\a_n$ is diagonalizable. Further, it is not difficult to see that
$\a_n = K_n^{-1} J_n K_n$, where $J_n, K_n$ and $K_{n}^{-1}$ are respectively given by

\begin{equation*}
J_n = \left (
\begin{matrix}
\rho_1^n & 0\\ \\
0 & \rho_{2}^n\\
\end{matrix}
\right), \ \ \ \displaystyle K_n = \left (
\begin{matrix}
1 & 1\\ \\
\rho_{1}^n&\rho_2^n\\
\end{matrix}
\right),
\end{equation*}
and

\begin{equation*}
\displaystyle K_n^{-1}= \frac{1}{\rho_1^n - \rho_2^n}\left (
\begin{matrix}
-\rho_2^n & 1\\ \\
\rho_{1}^n&-1\\
\end{matrix}
\right).
\end{equation*}

For $\lambda \in S_{\omega}$ and $\ph\in \E_{\frac{1}{2}}$, one has
\begin{align*}
R(\lambda, \a)\ph &=\displaystyle
\sum_{n=1}^{\infty}(\lambda-\a_n)^{-1}P_n \ph\\
&= \sum_{n=1}^{\infty}K_n(\lambda-
J_n )^{-1}K_n^{-1}P_n\ph.
\end{align*}
Hence,
\begin{align*}
\Big\|R(\lambda, \a)\ph\Big\|_{\E_{\frac{1}{2}}}^2 &\leq  \displaystyle
\sum_{n=1}^{\infty}\Big\|K_n(\lambda-J_n)^{-1}K_n^{-1}\Big\|^2
\Big\|P_n\ph\Big\|_{\E_{\frac{1}{2}}}^2\\
&\leq \displaystyle \sum_{n=1}^{\infty}\Big\|K_n\Big\|^2
\Big\|(\lambda-J_n)^{-1}\Big\|^2 \ \Big\|K_n^{-1}\Big\|^2  \Big\|P_n \ph\Big\|_{\E_{\frac{1}{2}}}^2.
\end{align*}

It is easy to see that there exist two constants  $C_1, C_2>0$ such that
\begin{equation*}
\|K_n\| \leq C_1|\rho_1^n(t)|, \ \ 
\|K_n^{-1}\|\leq \frac{C_2}{|\rho_1^n|}  \quad\mbox{for
all }\;\, n \geq 1.
\end{equation*}
Now
\begin{align*}
\|(\lambda - J_n)^{-1}\|^2&= \left\|\left(\begin{matrix}
\frac{1}{\lambda-\rho_1^n}&0\\ \\ \\
0 & \frac{1}{\lambda-\rho_2^n}\\
\end{matrix}
\right)\right\|^2\\
 &\leq \frac{1}{|\lambda-\rho_1^n|^2}+
 \frac{1}{|\lambda-\rho_2^n|^2}.
\end{align*}
Define the function
$$\Theta(\lambda):=\frac{|\lambda|}{|\lambda-\rho_1^n(t)|}.$$

It is clear that $\Theta$ is
continuous and bounded on $S_\omega$.
If we take $$
C_3=\sup\left\{\frac{|\lambda|}{|\lambda-\lambda_k^n|}\; :
\; \lambda \in S_{\omega},n\geq 1\,;\; k=1,2\right\}$$it follows that
\begin{equation*}
\|(\lambda - J_n)^{-1}\| \leq
\frac{C_3}{\left|\lambda\right|},\quad   \lambda \in S_{\omega}.
\end{equation*}
Therefore, one can find a constant $K \geq 1$ such
\begin{equation*} \|R(\lambda,\a)\|_{B(\E_{\frac{1}{2}})}  \leq \frac{K}{|\lambda|}, \ \ \lambda \in S_{\omega},
\end{equation*}
and hence the operator $\a$ is sectorial on $\E_{\frac{1}{2}}$.

Since $\a$ is sectorial in $\E_{\frac{1}{2}}$, then it generates an analytic semigroup
$({\mathcal T} (\tau))_{\tau \geq 0} :=(e^{\tau \a})_{\tau\geq 0}$
on $\E_{\frac{1}{2}}$ given by
$$
e^{\tau \a}\ph=\displaystyle \sum_{n=0}^{\infty} K_n^{-1}P_n e^{\tau
J_n}P_n K_n P_n \ph.
$$

First of all, note that the semigroup $({\mathcal T} (\tau))_{\tau \geq 0}$ is hyperbolic as $\sigma(\a) \cap i \R = \emptyset$. In order words,
$\a$ satisfies an assumption similar to (H.1). 

Secondly, 
using the fact that $\a$ is an operator of compact resolvent it follows that ${\mathcal T} (\tau)$ is compact for $\tau > 0$.
 On the other hand,  we have
 \begin{eqnarray*}
 \|e^{\tau \a }\ph\|_{\E_{\frac{1}{2}}} &=&\displaystyle \sum_{n=0}^{\infty}\|K_n^{-1}P_n\|
 \|e^{\tau J_n}P_n\| \|K_n P_n\|
 \|P_n \ph\|_{\E_{\frac{1}{2}}},
 \end{eqnarray*}
 with for each  $\ph=\left(\begin{smallmatrix}\ph_1\\ \\ \\\ph_2\end{smallmatrix}\right) \in \E_{\frac{1}{2}},$
 \begin{eqnarray*}
  \|e^{\tau J_n}P_n\ph\|_{\E_{\frac{1}{2}}}^2 &=& \left\|
  \left(\begin{matrix}
   e^{\rho_1^n  \tau} E_n&0\\ \\ \\
   0&   e^{\rho_2^n \tau}E_n\\
  \end{matrix}
  \right)
  \left(
  \begin{matrix}
    \ph_1\\ \\ \\
    \ph_2\\
 \end{matrix}\right)
\right\|_{\E_{\frac{1}{2}}}^2\\
& \leq & \| e^{\rho_1^n \tau}E_n \ph_1\|_{\frac{1}{2}}^2+
\|e^{\rho_2^n\tau}E_n \ph_2\|^2\\
& \leq & e^{ \Re e (\rho_1^n ) \tau}\|\ph\|_{\E_{\frac{1}{2}}}^2.
 \end{eqnarray*}

Using Eq. (\ref{AS1}) it follows there exists $N' \geq 1$ such that
\begin{equation*}\label{eq5}
\|{\mathcal T}(\tau)\|_{B(\E_{\frac{1}{2}})} \leq N' e^{-\delta_0 \tau}, \quad \tau\geq0,
\end{equation*}
and hence ${\mathcal T}(\tau)$ is
exponentially stable, that is, $\a$ satisfies an assumption similar to (H.2) in $\E_{\frac{1}{2}}$.

{\bf Step 3}. The fact that $F$ satisfies similar assumptions as (H.4) and (H.5) is clear.

\end{proof}

\section{Example} In this section, we take $\alpha = \beta = \frac{1}{2}$. Let $\Omega \subset \R^N$ be an open bounded set with sufficiently smooth boundary $\partial \Omega$ and
let $\H = L^2(\Omega)$ be the Hilbert space of all measurable functions $\ph: \Omega \mapsto \C$ such that 
$$\|\ph\|_{L^2(\Omega)} = \Bigg(\int_{\Omega} |\ph(x)|^2 dx\Bigg)^{1/2} < \infty.$$

Here, we study the existence of pseudo-almost automorphic solutions $\ph(t,x)$ to a structurally
damped plate-like system given by (see also Chen and Triggiani \cite{CT3}, Schnaubelt and Veraar \cite{SN}, Triggiani \cite{TRIG}),
\begin{eqnarray}\label{D1000} \hspace{1.5cm} \frac{\partial^2 \ph}{\partial t^2}(t,x) -2\gamma \Delta \frac{\partial \ph}{\partial t}(t,x) + \Delta^2 \ph(t,x) &=&   \int_{-\infty}^t b(t-s) \ph (s,x) ds + f(t, \ph(t,x))\\
&+& \eta  \ph(t,x), \ \  (t,x)\in \R \times \Omega\nonumber
\end{eqnarray}
\begin{eqnarray}\label{D2000}
\Delta \ph (t,x) = \ph(t,x)  &=& 0, \ \ (t,x)\in \R \times \partial \Omega,\end{eqnarray}
where $\gamma, \eta > 0$ are constants, $b: \R \mapsto [0, \infty)$ is a measurable function, the function $f: \R \times L^2(\Omega) \mapsto L^2(\Omega)$ is pseudo-almost automorphic in $t \in \R$ uniformly in the second variable, and $\Delta$ stands for the usual Laplace operator in the space variable $x$.

Setting 
$$A  \ph =  \Delta^2 \ph \ \ \mbox{for all} \ \ \ph \in D(A) = D(\Delta^2) = \Big\{\ph \in H^4 (\Omega): \Delta \ph = \ph = 0 \ \ \mbox{on} \ \ \partial \Omega\Big\},$$
$$B \ph = A^{\frac{1}{2}}\ph = - \Delta \ph, \ \ \forall \ph \in D(B) = H_0^1(\Omega) \cap H^2(\Omega),$$
$$C(t)\ph = b(t) \ph \ \ \mbox{for all} \ \ \ph \in D(C(t)) = D(\Delta),$$ and $f: \R \times \Big(H_0^1(\Omega) \cap H^2(\Omega)\Big) \mapsto L^2(\Omega)$, one can easily see that Eq. (\ref{PRRR}) is exactly
the structurally
damped plate-like system formulated in 
Eqs. (\ref{D1000})-(\ref{D2000}).

Here $\E_{\frac{1}{2}} = D(A^{\frac{1}{2}}) \times L^2(\Omega) = \Big(H_0^1(\Omega) \cap H^2(\Omega)\Big) \times L^2(\Omega)$ and it is equipped with the inner product defined by
$$\Bigg(\left(\begin{smallmatrix}\displaystyle \ph_1 \\ \\ \\ \displaystyle \ph_2\end{smallmatrix}\right), \left(\begin{smallmatrix}\displaystyle \psi_1 \\ \\ \\  \displaystyle \psi_2\end{smallmatrix}\right) \Bigg)_{\E_{\frac{1}{2}}} := \int_{\Omega} \Delta \ph_1 \overline{\Delta \psi_1} dx + \int_{\Omega} \ph_2 \overline{\psi_2} dx$$
for all $\ph_1, \psi_1 \in H_0^1(\Omega) \cap H^2(\Omega)$ and $\ph_2, \psi_2 \in L^2(\Omega)$. 

Similarly, $D(A^{\frac{1}{2}}) = H_0^1(\Omega) \cap H^2(\Omega)$ is equipped with the norm defined by
$$\|\ph\|_{\frac{1}{2}} = \|A^{\frac{1}{2}}\ph\|_{L^2(\Omega)} := \Big(\int_{\Omega} |\Delta \ph|^2dx \Big)^{\frac{1}{2}}$$
for all $\ph  \in H_0^1(\Omega) \cap H^2(\Omega)$.

Clearly, $-A_\eta = - (\Delta^2 + \eta I)$ is a sectorial operator on $L^2(\Omega)$ and let $(T(t))_{t \geq 0}$ be the analytic semigroup associated with it. It is well-known that the semigroup $T(t)$ is not only compact for $t > 0$ but also is exponentially stable as
$$\|T(t)\| \leq  e^{-\eta t}$$ for all $t \geq 0$.

Using the fact the Laplace operator $\Delta$ with domain $D(\Delta) = H^2(\Omega) \cap H_0^1(\Omega)$ is invertible in $L^2(\Omega)$ it follows that 
\begin{eqnarray*}
\|\ph\|_{L^2(\Omega)} &=& \|\Delta^{-1} \Delta \ph\|_{L^2(\Omega)} \\
&\leq& \|\Delta^{-1}\|_{B(L^2(\Omega))} \,.\,\|\Delta \ph\|_{L^2(\Omega)} \\
&=& \|\Delta^{-1}\|_{B(L^2(\Omega))} \,.\,\|A^{\frac{1}{2}}\ph\|_{L^2(\Omega)}\\
&=& \|\Delta^{-1}\|_{B(L^2(\Omega))} \,.\,\|\ph\|_{\frac{1}{2}}\\
\end{eqnarray*}for all $\ph \in H^2(\Omega) \cap H_0^1(\Omega)$.

If $b \in L^1(\R, (0, \infty))$, then using the previous inequality it follows that 
\begin{eqnarray*}
\|\c(t) \Phi\|_{\E} &=&  b(t) \left\|\ph\right\|_{L^2(\Omega)}\\
&\leq& b(t) \|\Delta^{-1}\|_{B(L^2(\Omega))} \,.\,\|\ph\|_{\frac{1}{2}} \\
&\leq& b(t) \|\Delta^{-1}\|_{B(L^2(\Omega))}\,.\, \|\Phi\|_{\E_{\frac{1}{2}}}\\
\end{eqnarray*}
for all $\displaystyle \Phi = \left(\begin{smallmatrix}\displaystyle \ph \\ \\ \\ \displaystyle \psi\end{smallmatrix}\right) \in \E_{\frac{1}{2}}$ and $t \in \R$.

This setting requires the following assumptions,
\begin{enumerate}
\item[(H.7)] Eq. (\ref{AS1}) holds.
\item[(H.8)] The function $b: \R \mapsto [0, \infty)$ belongs to $L^1(\R, (0, \infty))$ with $$\displaystyle \int_{-\infty}^\infty b(s) ds \leq \frac{1}{2\|\Delta^{-1}\|_{B(L^2(\Omega))} d(\frac{1}{2})},$$
where $d (\frac{1}{2}) := M(\frac{1}{2}) \sqrt{\displaystyle \frac{2\pi}{\eta}}$.
\end{enumerate}

In view of the above, it is clear that assumptions (H.1)-(H.2)-(H.3)-(H.6) are fulfilled. Therefore, using Corollary \ref{ABB}, we obtain the following theorem.

\begin{theorem}\label{ABC}
Under assumptions {\rm (H.4)--(H.5)--(H.7)--(H.8)}, then the system  Eqs. (\ref{D1000})-(\ref{D2000}) has at least one pseudo-almost automorphic mild solution.
\end{theorem}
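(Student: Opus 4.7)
The plan is to recast \eqref{D1000}--\eqref{D2000} as a particular instance of the abstract equation \eqref{PRRR} via the identifications laid out in the paragraph preceding the theorem (taking $\alpha = \beta = \frac{1}{2}$, $A\ph = \Delta^2\ph$, $B = 2\gamma A^{1/2} = -2\gamma\Delta$, and $C(t)\ph = b(t)\ph$), and then to invoke Corollary~\ref{ABB}. Since (H.4), (H.5), and (H.7) are assumed directly in the statement of Theorem~\ref{ABC}, the task reduces to verifying the analogues of (H.1), (H.2), and (H.6) in the concrete $L^2$-setting, after which the corollary delivers the desired solution automatically.

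For (H.1) and (H.2), I would work with the operator $A_\eta := \Delta^2 + \eta I$ obtained after absorbing the $\eta \ph$ term into the generator. It is self-adjoint and positive on $L^2(\Omega)$, its spectrum consists of isolated eigenvalues bounded below by $\eta > 0$, and it admits a complete orthonormal basis of eigenfunctions. Sectoriality is standard for such a positive self-adjoint operator, and $\sigma(A_\eta) \cap i\R = \emptyset$ immediately yields hyperbolicity of the analytic semigroup $(T(t))_{t\geq 0}$ it generates. The estimate $\|T(t)\| \leq e^{-\eta t}$ follows from the spectral representation, and compactness of $T(t)$ for $t > 0$ follows from the compact embedding $D(A_\eta) \hookrightarrow L^2(\Omega)$ on the bounded smooth domain $\Omega$.

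For (H.6), I would invoke the invertibility of the Dirichlet Laplacian $\Delta: H_0^1(\Omega)\cap H^2(\Omega) \to L^2(\Omega)$. The chain of estimates displayed just before the theorem statement gives
\begin{equation*}
\|\c(t)\Phi\|_\E \,=\, b(t)\|\ph\|_{L^2(\Omega)} \,\leq\, b(t)\,\|\Delta^{-1}\|_{B(L^2(\Omega))}\,\|\Phi\|_{\E_{\frac{1}{2}}}
\end{equation*}
for every $\Phi \in \E_{\frac{1}{2}}$, suggesting the choice $\rho(t) := b(t)\|\Delta^{-1}\|_{B(L^2(\Omega))}$. By (H.8) and the identity $d(\tfrac{1}{2}) = M(\tfrac{1}{2})\sqrt{2\pi/\eta}$ (which comes from $\Gamma(\tfrac{1}{2})=\sqrt{\pi}$ together with $\delta = \eta$), one has $\rho \in L^1(\R,(0,\infty))$ with $\|\rho\|_{L^1} \leq 1/(2d(\tfrac{1}{2}))$, which is exactly (H.6).

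With all hypotheses verified, Corollary~\ref{ABB} applies and furnishes a pseudo-almost automorphic mild solution $\Phi$ of the associated first-order system in $\E_{\frac{1}{2}}$; its first component is the sought pseudo-almost automorphic mild solution of \eqref{D1000}--\eqref{D2000}. The main obstacle is essentially bookkeeping rather than analysis: the serious work (sectoriality of the block operator $\a$, fractional-power and compactness estimates for the associated semigroup, and the Schauder fixed-point argument) is already encapsulated in Corollary~\ref{ABB}, so what remains is only a careful translation between the concrete PDE data and the abstract framework.
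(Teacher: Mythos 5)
Your proposal follows exactly the paper's own route: the preceding discussion in the paper identifies $A=\Delta^2$, $B=2\gamma A^{1/2}=-2\gamma\Delta$, $C(t)\ph=b(t)\ph$, notes that $-A_\eta=-(\Delta^2+\eta I)$ generates a compact, exponentially stable analytic semigroup with $\|T(t)\|\leq e^{-\eta t}$, and uses the invertibility of the Dirichlet Laplacian together with (H.8) to verify (H.6) with $\rho(t)=b(t)\|\Delta^{-1}\|_{B(L^2(\Omega))}$, after which Corollary~\ref{ABB} is invoked. Your verification of (H.1), (H.2), and (H.6), including the computation $d(\tfrac{1}{2})=M(\tfrac{1}{2})\sqrt{2\pi/\eta}$, matches the paper's argument, so the proposal is correct and takes essentially the same approach.
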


\bibliographystyle{amsplain}

\end{document}